\numberwithin{equation}{section}
\newtheorem{theorem}{Theorem}[section]
\newtheorem{lemma}[theorem]{Lemma}
\newtheorem{proposition}[theorem]{Proposition}
\theoremstyle{definition}
\newtheorem{definition}[theorem]{Definition}
\newcommand{\N}{\mathbb{N}}
\newcommand{\R}{\mathbb{R}}
\newcommand{\KK}{\mathscr{K}}
\newcommand{\QQ}{\mathscr{Q}}
\newcommand{\XX}{\mathscr{X}}
\newcommand{\cB}{{\ensuremath{\mathcal B}}}
\newcommand{\cE}{{\ensuremath{\mathcal E}}}
\newcommand{\cF}{{\ensuremath{\mathcal F}}}
\newcommand{\cI}{{\ensuremath{\mathcal I}}}
\newcommand{\cL}{{\ensuremath{\mathcal L}}}
\newcommand{\cP}{{\ensuremath{\mathcal P}}}
\newcommand{\cR}{{\ensuremath{\mathcal R}}}
\newcommand{\cT}{{\ensuremath{\mathcal T}}}
\newcommand{\sfa}{{\sf a}}
\newcommand{\sfb}{{\sf b}}
\newcommand{\sfc}{{\sf c}}
\newcommand{\sfd}{{\sf d}}
\newcommand{\sfp}{{\sf p}}
\newcommand{\sfq}{{\sf q}}
\newcommand{\sfr}{{\sf r}}
\newcommand{\sfs}{{\sf s}}
\newcommand{\sft}{{\sf t}}
\newcommand{\sfu}{{\sf u}}
\newcommand{\sfw}{{\sf w}}
\newcommand{\sF}{{\sf F}}
\newcommand{\sfL}{{\sf L}}
\newcommand{\sfS}{{\sf S}}
\newcommand{\fre}{{\frak e}}
\newcommand{\frf}{{\frak f}}
\newcommand{\rmm}{{\mathrm m}}
\newcommand{\rmB}{{\mathrm B}}
\newcommand{\rmC}{{\mathrm C}}
\newcommand{\rmD}{{\mathrm D}}
\newcommand{\rmI}{{\mathrm I}}
\newcommand{\rmJ}{{\mathrm J}}
\newcommand{\rmM}{{\mathrm M}}
\newcommand{\rmT}{{\mathrm T}}
\newcommand{\rmV}{{\mathrm V}}
\newcommand{\Kliminf}{K\kern-3pt-\kern-2pt\mathop{\rm lim\,inf}\limits}  % Kuratowski liminf di insiemi
\newcommand{\argmin}{\mathop{\rm argmin}\limits}   % argmin
\renewcommand{\d}{{\mathrm d}}
\newcommand{\dt}{{\d t}}
\newcommand{\restr}[1]{\lower3pt\hbox{$|_{#1}$}}
\newcommand{\topref}[2]{\stackrel{\eqref{#1}}#2}
\newcommand{\Leb}[1]{{\mathscr L}^{#1}}      % Misura di Lebesgue
\newcommand{\down}{\downarrow}              %frecce in su e in giu nei limiti
\newcommand{\up}{\uparrow}
\newcommand{\weakto}{\rightharpoonup}
\newcommand{\weaksto}{\rightharpoonup^*}
\newcommand{\eps}{\varepsilon}  
\newcommand{\nchi}{{\raise.3ex\hbox{$\chi$}}}
\newcommand{\Rd}{{\R^d}}
\newcommand{\forae}{\text{for a.e.}}
\newcommand{\forevery}{\text{for every }}
\newcommand{\essJ}{\mathop{{\rm ess}\text{-}{\rm J}}\nolimits}
\newcommand{\AC}[3]{\mathrm{AC}^{#1}(#2;#3)}
\newcommand{\res}{\mathop{\hbox{\vrule height 7pt width .5pt depth 0pt
\vrule height .5pt width 6pt depth 0pt}}\nolimits} % macro per la restrizione
\newcommand{\BV}[2]{\mathrm{BV}([#1];#2)}
\newcommand{\Var}{\mathrm{Var}}
\newcommand{\mmd}[2]{|\d#1|_{#2}}
\newcommand{\comd}[2]{|#1'|_{#2}}
\newcommand{\jmd}[2]{|\rmJ #1|_{#2}}
\newcommand{\Cmd}[2]{|\rmC #1|_{#2}}
\newcommand{\Jmp}[1]{\mathrm{Jmp}_{#1}}
\newcommand{\pJmp}[1]{\text{\slshape Jmp}_{#1}}
\newcommand{\topto}{\stackrel\sigma\rightarrow}
\newcommand{\hinH}{h\in H}
\newcommand{\finsl}[3]{\mathrm{Length}_{#1,#2}[#3]}
\newcommand{\bicost}[3]{\Delta_{#1,#2}(#3)}
\newcommand{\tricost}[3]{\Delta_{#1,#2}(#3)}
\renewcommand{\top}{\sigma}
\renewcommand{\cT}{\rmI}
\begin{document}

%-------------------------------------------------------------------------
% editorial commands: to be inserted by the editorial office
%
%\firstpage{1} \volume{228} \Copyrightyear{2004} \DOI{003-0001}
%
%
%\seriesextra{Just an add-on}
%\seriesextraline{This is the Concrete Title of this Book\br H.E. R and S.T.C. W, Eds.}
%
% for journals:
%
%\firstpage{1}
%\issuenumber{1}
%\Volumeandyear{1 (2004)}
%\Copyrightyear{2004}
%\DOI{003-xxxx-y}
%\Signet
%\commby{inhouse}
%\submitted{March 14, 2003}
%\received{March 16, 2000}
%\revised{June 1, 2000}
%\accepted{July 22, 2000}
%
%
%
%---------------------------------------------------------------------------
%Insert here the title, affiliations and abstract:
%

\title[Variational convergence of gradient and rate-independent flows]{Variational convergence of gradient flows and rate-independent
  evolutions in metric spaces}

\author{Alexander Mielke}

\address{{Weierstra\ss-Institut,
    Mohrenstra\ss{}e 39, 10117 D--Berlin\\
    and\\
    Institut f\"ur
    Mathematik, Humboldt-Universit\"at zu
    Berlin\\
    Rudower Chaussee 25, D--12489 Berlin (Adlershof)\\
    Germany}}

\email{mielke\,@\,wias-berlin.de}

\author{Riccarda Rossi}
\address{{Dipartimento di Matematica, Universit\`a di
 Brescia\\
 via Valotti 9, I--25133 Brescia, Italy}}

\email{riccarda.rossi\,@\,ing.unibs.it}

\author{Giuseppe Savar\'e}
\address{{Dipartimento di Matematica ``F.\
Casorati'', Universit\`a di Pavia\\
 Via Ferrata 1, I--27100 Pavia, Italy}}

\email{giuseppe.savare\,@\,unipv.it}

%----------classification, keywords, date
\subjclass{Primary  49Q20; Secondary  34A60}

\keywords{Gamma-convergence, doubly nonlinear evolution equations, BV
  solutions, differential inclusions; viscous regularization; vanishing-viscosity limit}

\date{July 10, 2012}

\begin{abstract} 
  We study the asymptotic behaviour of families of gradient flows in a general
  metric setting, when the metric-dissipation potentials degenerate in
  the limit to a dissipation with linear growth. 
  
  We present a
  general variational definition of BV solutions to metric evolutions, showing the
  different characterization of the solution in the absolutely
  continuous regime, on the singular Cantor part, and along the jump
  transitions.
  By using tools of metric analysis, BV functions and blow-up by time
  rescaling, we show that this variational notion is stable with respect to a wide class
  of perturbations involving energies, distances, and dissipation
  potentials.

  As a particular application, we show that BV solutions 
  to rate-independent problems arise naturally as a limit of
  $p$-gradient flows, $p>1$, when the exponents $p$ converge to $1$.
\end{abstract}
\thanks{R.R. and G.S. have been partially supported by
PRIN08 grant from MIUR for the project \emph{Optimal transport theory,
  geometric and functional inequalities, and applications}.}
\maketitle

%%%%%%%%%%%%%%%%%%%%%%%%%%%%%%%%%%%%%%%%%%%%%%%%%%%%%%%%%%%%
%                                                         % Body
%%%%%%%%%%%%%%%%%%%%%%%%%%%%%%%%%%%%%%%%%%%%%%%%%%%%%%%%%%%%

\section{Introduction}
\label{sec:intro}
The aim of this paper is to study the asymptotic behaviour of the
solutions to a sequence of gradient flows (in a suitable metric setting),
when the governing energies and metric-dissipation potentials 
give raise in the limit to a
rate-independent evolution or, more generally, to an evolution
driven by a dissipation potential with linear growth.

\subsection*{A finite dimensional example: superlinear dissipation
  potentials and absolutely continuous gradient flows.}
In order to explain the problem, let us start from a simple example 
in a finite dimensional manifold $\XX$
(see e.g.\ the motivating discussion in \cite{Mielke-Rossi-Savare09}).
We fix a time interval $[0,T]$, we denote by $\QQ$ the product space
$\QQ=[0,T]\times \XX$, and we consider a sequence of smooth energies
$\cE_h:\QQ\to\R$ indexed by
$h\in \N$. We are also given a sequence of smooth
dissipation potentials $\cR_h:\mathrm{T}\XX\to [0,\infty)$ 
of the form
$$\text{$\cR_h(u,\dot u)=\psi_h(\|\dot u\|_{u,h})$ \quad
  where $\|\cdot\|_{u,h}$ are norms
on the tangent space $\mathrm T_u\XX$}$$
smoothly depending on $u\in
\XX$ and 
$$\text{$\psi_h:[0,\infty)\to[0,\infty)$ are $\rmC^1$ convex functions with
superlinear growth}.$$
Typical examples are
\begin{equation}
  \label{eq:55}
  \begin{aligned}
    \psi_h(v)&=\frac 1{p_h}\,v^{p_h}\quad&&\text{with }p_h>1, \\
    %&&\lim_{h\to\infty}p_h=1
    \psi_h(v)&=v+ {\eps_h}\,v^p\quad&&\text{with }p>1,\ \eps_h>0.%\quad &&\lim_{h\to\infty}\eps_h=0.
  \end{aligned}
\end{equation}
For given initial data $\bar u_h\in \XX$ we can consider the solutions
$u_h:[0,T]\to \XX$ of the Cauchy problem for the doubly nonlinear differential equations
\begin{equation}
  \label{eq:86}
  \rmD_{\dot u} \cR_h(u_h(t),\dot u_h(t))+\rmD_u\cE_h(t,u_h(t))=0\quad 
  \text{in }\rmT^*\XX,%\ t\in [0,T],
  \quad
  u_h(0)=\bar u_h.
\end{equation}
In \eqref{eq:86} the parameter $h\in \N$ affects the limit behaviour
of the initial data $\bar u_h$, of the energies $\cE_h$ in $\QQ$, of the norms $\|\cdot\|_{\cdot,h}$
on $\rmT\XX$, and of the
dissipation potentials $\psi_h$ on $[0,\infty)$. Assuming that (in a
suitable sense that we will describe later on) $\bar
u_h\to\bar u$, $\cE_h\to\cE$, $\|\cdot\|_{u,h}\to \|\cdot\|_{u}$, $\psi_h\to\psi$ as
$h\to\infty$, it is then natural to investigate if a limit curve $u$
(possibly up to subsequence) of the
solutions $(u_h)_h$ still satisfies the corresponding limit equation
of \eqref{eq:86}.

\subsection*{BV solutions to rate-independent evolutions.}
We want to address here the singular situation when the limit
dissipation potential $\psi$ loses the superlinear growth;
let us focus here on the $1$-homogeneous case when
\begin{equation}
  \label{eq:87}
  \lim_{h\to\infty}\psi_h(v)=\psi(v):=\sfL \, v,\quad\text{for some $\sfL>0$}
\end{equation}
corresponding e.g.\
to $\lim_{h\to\infty}p_h=1,$ or $\lim_{h\to\infty}\eps_h=0$ in
\eqref{eq:55} (in that cases $\sfL=1$).
The limit problem is formally the differential inclusion
\begin{equation}
  \label{eq:86bis}
  \sfL\,\partial_{\dot u} \cR(u(t),\dot u(t))+\rmD_u\cE(t,u(t))\ni 0\quad \text{in
  }\rmT^*\XX,\ t\in [0,T],\quad
  u(0)=\bar u,
\end{equation}
where the presence of the subdifferential $\partial_{\dot u}\cR$ is
motivated by the lack of differentiability of the norm $\cR(u,\dot
u)=\|\dot u\|_u$ at $\dot u=0$. Since $\cR(u,\cdot)$ is $1$-positively
homogeneous, \eqref{eq:86bis} describes a rate-independent evolution and 
its solutions 
exhibit a different behavior with respect to the viscous flows
\eqref{eq:86}. In particular, jumps can occur even for smooth
energies $\cE$ and various kinds of solutions have been proposed in the
literature
(we refer to % see
% \cite{Mielke-Theil-Levitas02,Mainik-Mielke05,Mielke05,Mielke-Rossi-Savare09,Mielke-Rossi-Savare10}
the surveys \cite{Mielke05}, the overall presentation in \cite{Mielke11} and the references therein).
Here we focus on the notion of BV solution, proposed in 
\cite{Mielke-Rossi-Savare09,Mielke-Rossi-Savare10}:
for the sake of simplicity, in this introductory section 
we consider the simplest case of 
a piecewise
smooth curve $u$ with a finite number of jump points
$\rmJ_u=\{t_1,t_2,\cdots t_n\}\subset [0,T]$; $u(t_{i\,\pm})$ will
denote
the left and the right limit of $u$ at each $t_i$ (see also
\cite{Rossi-Savare12} for an explicit characterization in a one-dimensional setting)

In this case a BV solution $u$ can be characterized by two 
conditions:
\begin{enumerate}[(BV1)]
\item In each interval $(t_{i-1},t_i)$ the velocity
  vector field $\dot u$ satisfies the differential inclusion
  \eqref{eq:86bis}, which 
  yields 
   in particular the local stability condition
  \begin{equation}
    \label{eq:100}
    \cF(t,u(t))\le \sfL\quad \text{for every
    }t\in [0,T]\setminus \rmJ_u
  \end{equation} 
  and the energy dissipation 
  \begin{equation}
    \label{eq:104}
    -\frac \d{\d t}\cE(t,u(t))+\cP(t,u(t))=\sfL\, \|\dot
    u(t)\|_{u(t)}\quad\text{in }(t_{i-1},t_i),
  \end{equation}
  where $    \cP(t,u):=\frac\partial{\partial t}\cE(t,u)$ and
$\cF$ denotes the dual norm of the (opposite) differential of
  the energy, 
  \begin{equation}
    \label{eq:107}
    \cF(t,u):=\|\rmD_u\cE(t,u)\|^*_{u}=\sup\Big\{
    {-\langle\rmD_u\cE(t,u),v\rangle}:\|v\|_{u}\le 1\Big\}.
  \end{equation}
  It turns out that in the smooth regime \eqref{eq:100} and
  \eqref{eq:104} are equivalent 
  to \eqref{eq:86bis}.
\item  At each jump point $t_i$ it is possible to find an optimal
  transition path $\vartheta_i:[r_{i-},r_{i+}]\to \XX$, $r_{i-}\le 0\le r_{i+}$,
  such that $\vartheta_i(r_{i\pm})=u(t_{i\,\pm})$,
  $\vartheta_i(0)=u(t_i)$, $\cF(r,\vartheta_i(r))\ge \sfL$ in
  $[r_{i-},r_{i+}]$, and
  %\begin{equation}
    \begin{align}
      \label{eq:105}
      &\int_{r_{i-}}^{r_{i+}}\cF(r,\vartheta_i(r))\|\dot\vartheta_i(r)\|_{\vartheta_i(r)}\,\d
      r=\cE(t_i,u(t_{i\,-}))-\cE(t_i,u(t_{i\,+}))\\&\notag=
      \min\Big\{\int_{r_{i-}}^{r_{i+}}(\cF(r,\theta(r))\lor
      \sfL)\|\dot\theta(r)\|_{\theta(r)}\,\d
      r:% \theta:[r_-,r_+]\to \XX,\
      \theta(r_{i\,\pm})=u(t_{i\,\pm}),\ \theta(0)=u(t_i)\Big\}.
    \end{align}
%  \end{equation}
\end{enumerate}
Notice that the choice of the interval $[r_{i\,-},r_{i\,+}]$ is not
essential,
since the integrals in \eqref{eq:105} are invariant with respect to
monotone time rescaling. 
The minimum problem in
\eqref{eq:105} characterizes the minimal transition cost at each jump point $t_i$
to connect in $u(t_{i\,-})$ with $u(t_{i\,+})$ passing through
$u(t_i)$.
Such a cost is influenced both by the norms $\|\cdot\|_{u}$ and by the
slope $\cF$ of the energy: we will denote it by $\Delta_{t_i}(u(t_{i\,-}),u(t_i),u(t_{i\,+}))$.
\subsection*{Energy-dissipation inequalities.}
It is a remarkable fact, highlighted in
\cite{Mielke-Rossi-Savare09,Mielke-Rossi-Savare10},
that the refined structure given in (BV1,BV2) can be captured by simply imposing
the local stability condition \eqref{eq:100} and 
a single energy-dissipation inequality, namely
\begin{equation}
  \label{eq:101}
  \begin{aligned}
    \cE(T,u(T))+\sfL\int_0^T \|\dot u(t)\|_{u}\,\d t&+\sum_{i=1}^n
    \Delta_{t_i}(u(t_{i\,-}),u(t_i),u(t_{i\,+}))\\&\le \cE(0,\bar
    u)+\int_0^T \cP(t,u(t))\,\d t.\
  \end{aligned}
\end{equation}
It turns out that \eqref{eq:101} is in fact an identity, since the
opposite inequality is always satisfied along \emph{any} piecewise
smooth curve $u$. If \eqref{eq:101} holds, then $u$ is forced to
satisfy \eqref{eq:104} along its smooth evolution, and 
the optimal transition paths obtained by solving the minimum problem 
in \eqref{eq:105} provide the right energy balance between $u(t_{i\,\pm})$.

The link of \eqref{eq:101} with the gradient flow \eqref{eq:86}
becomes more transparent if, following
\cite{Ambrosio-Gigli-Savare08,Rossi-Savare06,Rossi-Mielke-Savare08,Mielke-Rossi-Savare12},
one notices that also \eqref{eq:86} can be formulated as a
energy-dissipation inequality.
In fact, setting as before
\begin{equation}
  \label{eq:90a}
  \cF_h(t,u):=\|\rmD_u\cE_h(t,u)\|^*_{u,h}% =\sup\Big\{
  % {-\langle\rmD_u\cE_h(t,u),v\rangle}:\|v\|_{u,h}\le 1\Big\}
  ,\qquad
  \cP_h(t,u):=\frac\partial{\partial t}\cE_h(t,u),
\end{equation}
it is not difficult to check (see the informal discussion in the next
section) that a $\rmC^1$ curve $u_h$ with $u_h(0)=\bar u_h$ satisfies
\eqref{eq:86} if and only if the $\psi_h$ energy-dissipation inequality holds
\begin{equation}
  \label{eq:98}
  \begin{aligned}
    \cE_h(T,u_h(T))&+\int_0^T \Big(\psi_h\big(\|\dot
    u_h(t)\|_{u_h,h}\big)+\psi_h^*\big(\cF_h(t,u_h(t))\big)\Big)\,\d
    t\\&\le \cE_h(0,\bar u_h)+\int_0^T \cP_h(t,u_h(t))\,\d t,
  \end{aligned}
\end{equation}
where $\psi_h^*$ is the Legendre transform of $\psi_h$. 

\paragraph{A more general formulation in metric spaces.}
Here we want to show that the metric-variational approach to gradient
flows 
and rate-independent problems
provides a natural framework to study this singular perturbation
problem
and suggests a robust and general strategy to pass to the limit in
a much more general setting 
where 
\begin{itemize}
\item[-] $\XX$ is a topological space endowed with a family of complete
  extended distances $\sfd_h$, 
\item[-] the terms like $\|\dot u_h\|_{u_h,h}$
  are replaced by the \emph{metric velocity} induced by $\sfd_h$, 
\item[-] the
  functions $\cF_h,\cP_h$ can be characterized as an \emph{irreversible
    couple of upper gradients} in terms of the behaviour of the
  energies $\cE_h$ along arbitrary absolutely continuous curves with
  values in $(\XX,\sfd_h)$, and 
\item[-] $\psi$ is a general metric dissipation
  function with linear growth.
\end{itemize}

Postponing to the next two sections a more precise review of
motivations and definitions, we just remark that whenever sufficiently
strong a
priori estimates are available to guarantee the pointwise convergence
of $u_h$ to some limit function $u\in \BV{0,T}{(\XX,\sfd)}$,
then the heart of the problem consists in deriving \eqref{eq:101} (in a
suitably extended form allowing countably many jumps and Cantor-like terms
in the metric velocity), starting from the viscous inequality
\eqref{eq:98}.
Assuming convergence in energy of the initial data, i.e.\
$\lim_{h\to\infty}\cE_h(0,\bar u_h)=\cE(0,\bar u)$,
some lower-upper semicontinuity conditions on $(\cE_h)_h$ and $(\cP_h)_h$ along arbitrary
sequences $(x_h)_h$ with equibounded energy and
converging to $x$ in a fixed reference topology $\sigma$ of $\XX$
are naturally suggested by the structure of the main inequalities
\eqref{eq:98} and \eqref{eq:101}:
\begin{equation}
  \label{eq:108}
  \left\{
    \begin{aligned}
      &\liminf_{h\to\infty}\cE_h(t,x_h)\ge \cE(t,x), \\&
      % \qquad
      \limsup_{h\to\infty}\cP_h(t,x_h)\le \cP(t,x),
  \end{aligned}
  \right.
  \qquad\text{whenever}\quad
  x_h\topto x\text{ in }\XX.
\end{equation}
The most challenging point is provided by the limit behaviour of the integral term
\begin{equation}
  \label{eq:102}
  \int_0^T  \Big(\psi_h\big(\|\dot
  u_h(t)\|_{u_h,h}\big)+\psi_h^*\big(\cF_h(t,u_h(t))\big)\Big)\,\d
  t,
\end{equation}
which has been typically studied by a clever re-parametrization
technique, introduced by \cite{Efendiev-Mielke06} and then extended 
in various directions by
\cite{Mielke-Rossi-Savare09,Mielke-Zelik12,Mielke-Rossi-Savare10}.
This approach leads to the notion of the so-called \emph{parametrized
  solutions} to the rate-independent evolution and the crucial
assumption concerns the validity of the $\Gamma$-$\liminf$ space-time
estimate for the slopes
\begin{equation}
  \label{eq:103}
  \liminf_{h\to\infty}\cF_h(t_h,x_h)\ge
  \cF(t,x)\quad\text{whenever}\quad t_h\to
  t,\ x_h\topto x.
\end{equation}
In the present paper we propose a different technique, which avoids parametrized
solutions and thus allows for more general non-homogenous dissipation potentials
like
\begin{equation}
  \label{eq:112}
  \begin{aligned}
    \psi(v):=&\int_0^v (r\land \sfL)\,\d r=
    \begin{cases}
      \frac 12 v^2&\text{if }0\le v\le \sfL,\\
      \sfL\,v-\frac 12\sfL^2&\text{if }v\ge \sfL,
    \end{cases}
    \\
    %\quad
  %\psi(v):=\frac 1{p^*}v-\frac 1p[(1-v)-(1-v)^p]_+,\quad
    \psi(v):=&(1+v^2)^{1/2}.
  \end{aligned}
\end{equation}
Our approach involves weak convergence of measures to deal with
concentrations of the time derivative and blow-up
around jump points of the limit solution to recover the variational
structure of the transition.
In this way, an
easier rescaling is sufficient to construct the optimal transition
paths (see \eqref{eq:105}) from the converging family $(u_h)_h$
and to obtain the BV energy-dissipation inequality
\eqref{eq:101}.

\paragraph{Particular cases.}
Let us remark that various particular cases of the present setting are
interesting by themselves and have been considered from many different
points of view.
\begin{enumerate}[(i)]
\item A first important case for applications is when $\XX$ is a
  Hilbert space, $\psi_h(v)=\frac 12 v^2$, and the norms $\|\cdot\|_{u,h}$ are independent of $h$
  and coincide with the norm $\|\cdot\|$ of $\XX$.
  In this case we are dealing with the convergence of gradient flows
  and a typical situation arises when $\cE_h(t,u)=\cE_h(u)-\langle
  \ell(t), u\rangle$. 
  It is well known, since the pioneering contributions of 
  \cite{Spagnolo67,Spagnolo68,DeGiorgi-Spagnolo73},
  that convexity (or $\lambda$-convexity for some
  $\lambda\in \R$ independent of $h$) of the energies makes it
  possible to reduce
  \eqref{eq:103}
  to the simpler Mosco-convergence \cite{Mosco69} of $\cE_h$ 
  (see e.g.\
  \cite{Attouch84} or \cite{Brezis73} for the connection with the
  graph-convergence of the differential operators). 
  The link between $\Gamma$-convergence
  of the energies and convergence of the gradient flows in a metric
  setting has been considered in 
  \cite{Ambrosio-Gigli-Savare08,Ambrosio-Savare-Zambotti09,Daneri-Savare-preprint10}.

\item Another relevant situation is when both the energies and the
  distances depend on $h$: in the quadratic case a convergence result
  can be deduced by a joint $\Gamma$-convergence, see e.g.\
  \cite{Savare-Visintin97,Pennacchio-Savare-ColliFranzone05,Peletier-Savare-Veneroni10}.
  The role of the $\Gamma$-$\liminf$ condition on 
  the slopes as in \eqref{eq:103} in general non-convex setting has been 
  clarified in \cite{Ortner05,Serfaty11}.
 A very general stability result has been given in \cite{Mielke-Rossi-Savare12}.
  An interesting example where the limit of gradient flows gives raise
  to a singular limit in a new geometry is discussed in
  \cite{Arnrich-Mielke-Peletier-Savare-Veneroni-preprint11}.
\item The particular case when the $h$-dependence affects only the
  dissipation potential $\psi$ and gives raise to a rate-independent
  problem in the limit has been studied in 
  \cite{Mielke-Rossi-Savare09,Mielke-Rossi-Savare10,Mielke-Rossi-Savare-preprint12}.
  The $\Gamma$-limit of rate-independent evolutions, 
  in the framework of energetic solutions, has been studied in
  \cite{MielkeRoubicekStefanelli08}.
\end{enumerate}
\paragraph{Plan of the paper.}
In the next section we give more details on the simple
finite-dimensional example we introduced before, in order to
motivate the abstract metric approach, whose setting is explained
in Section \ref{sec:preliminaries}.

Section \ref{sec:main} contains our main results, concerning
compactness (Theorem \ref{thm:compactness}) 
and convergence (Theorem \ref{thm:convergence}) of gradient flows in a general setting.
A few examples are briefly presented at the end of the paper.

\section{The metric formulation of gradient flows in a smooth setting}
\label{sec:smooth}
Let $\XX$ be the finite-dimensional differentiable manifold discussed
in the Introduction. 
\subsection*{Length and metric derivative.} 
Let us first recall that the Finsler structure $\|\cdot\|_{u,h}$ on
$\rmT\XX$ allows us to define the length of a smooth curve
$\sfu\in \rmC^1([r_0,r_1];\XX)$ by
\begin{equation}
  \label{eq:88}
  \mathrm{Length}_h[\sfu]:=\int_{r_0}^{r_1}
  \|\dot\sfu(r)\|_{\sfu(r),h}\,\d r
\end{equation}
and a distance
\begin{equation}
  \label{eq:44}
  \sfd_h(u_0,u_1):=\inf\Big\{\mathrm{Length}_h[\sfu]% \int_{r_0}^{r_1}
  % \|\dot\sfu(r)\|_{\sfu(r),h}\,\d r
  :\sfu\in
  \rmC^1([r_0,r_1];\XX),\ \sfu(r_i)=u_i\Big\},
\end{equation}
which still retains the information of the norms $\|\cdot\|_{u,h}$,
since
\begin{equation}
  \label{eq:89}
  \|\dot\sfu(r)\|_{\sfu(r),h}=\lim_{s\to
    r}\frac{\sfd_h(\sfu(s),\sfu(r))}{|s-r|}\quad
  \forevery \sfu\in \rmC^1([r_0,r_1];\XX).
\end{equation}
The limit in \eqref{eq:89} can be extended to the general
setting of absolutely continuous curves in metric spaces:
it is denoted by $|\dot\sfu|_{\sfd_h}(r)$ and it is called
\emph{metric derivative} of the curve $\sfu$, see Definition \ref{def:metric-derivative}.
\subsection*{Chain rule and irreversible upper gradients.}
A second crucial quantity is the dual norm of the opposite differential of the
energy % (in the present symmetric setting, the minus in front of
% $\rmD_u\cE_h$ does not play any role, but it suggests the right
% inequalities and it is essential
% if one wants to extend these result to a possible asymmetric framework, see
% e.g.\ \cite{Rossi-Mielke-Savare08}):
\begin{equation}
  \label{eq:90}
  %\cF_h(t,u):=
  \|\rmD_u\cE_h(t,u)\|^*_{u,h}=\sup\Big\{
  {-\langle\rmD_u\cE_h(t,u),v\rangle}:\|v\|_{u,h}\le 1\Big\}.
\end{equation}
Observe that the quantity in \eqref{eq:90} also has a nice characterization in terms of curves, since the function
$(t,u)\mapsto \|\rmD_u\cE_h(t,u)\|^*_{u,h}$ is minimal among the
functions $\cF_h:\QQ\to[0,\infty)$ satisfying
the chain-rule inequality
\begin{equation}
  \label{eq:95}
  -\frac \partial{\partial r}\cE_h(t,\sfu(r))\le \cF_h(t,\sfu(r))|\dot\sfu|_{\sfd_h}(r)
\end{equation}
along arbitrary curves $\sfu\in \rmC^1([r_0,r_1];\XX)$. If one wants
to allow for time
variation of the energy, it is natural to introduce the partial time
derivative $\frac\partial{\partial t}\cE_h(t,u)$,
% \begin{equation}
%   \label{eq:91}
%   \cP_h(t,u):=
% \end{equation}
so that \eqref{eq:95} is in fact equivalent
to
\begin{equation}
  \label{eq:92}
  -\frac \d{\d r}\cE_h(t(r),\sfu(r))+\frac\partial{\partial
    t}\cE_h(\sft(r),\sfu(r))
  % \cP_h(t(r),\sfu(r))
  \dot \sft(r)\le \cF_h(\sft(r),\sfu(r))|\dot\sfu|_{\sfd_h}(r)
\end{equation}
along arbitrary regular curves $r\mapsto
(\sft(r),\sfu(r))\in \QQ$. 
If we only consider nondecreasing time parametrizations $r\mapsto
t(r)$, 
and we integrate \eqref{eq:92} along arbitrary intervals $[r_0,r_1]$, 
we see that the map $(t,u)\mapsto \frac\partial{\partial t}\cE(t,u)$ is maximal among all the functions
$\cP_h:\QQ\to\R$ satisfying 
\begin{equation}
  \label{eq:93}
  \begin{aligned}
    \cE_h(\sft(r_0),\sfu(r_0))&+\int_{r_0}^{r_1}\cP_h(\sft(r),\sfu(r))\dot
    \sft(r)\,\d r
    \\&
    \le \cE_h(\sft(r_1),\sfu(r_1))+\int_{r_0}^{r_1}
    \cF_h(\sft(r),\sfu(r))|\dot\sfu|_{\sfd_h}(r)\,\d r.
  \end{aligned}
\end{equation}
In fact, let us suppose that $\cF_h,\cP_h$ are continuous 
functions
satisfying \eqref{eq:93} along arbitrary regular curves with $\dot
\sft(r)\ge0$:
it would not be difficult to check that this property is equivalent to
\begin{equation}
  \label{eq:94}
  \left\{\begin{aligned}
      \cP_h(t,u)&\le \frac\partial{\partial t}\cE(t,u),\\
      \cF_h(t,u)&\ge
    \|\rmD_u\cE_h(t,u)\|^*_{u,h}
  \end{aligned}
  \right.
  \qquad\text{for every
  }(t,u)\in \QQ.
\end{equation}
If \eqref{eq:94} holds, we say that the couple $(\cF_h,\cP_h)$ is an \emph{irreversible
upper gradient} for the energy $\cE_h$ with respect to the distance
$\sfd_h$, see Definition \ref{def:upper-gradients},
and $(\XX,\sfd_h,\cE_h,\cF_h,\cP_h)$ is an upper-gradient evolution system.

This definition is the natural adaptation to
time-dependent functionals of the well-known notion of upper-gradient
in the frame of 
analysis in metric spaces (see \cite{Cheeger00,Ambrosio-Gigli-Savare08});
the interesting fact is that \eqref{eq:93} only involves the notion of
absolutely continuous curves in $(\XX,\sfd_h)$.
\subsection*{$\psi$-gradient flows and energy-dissipation
  inequality.}
The distinguished role of gradient flows with respect to \eqref{eq:92}
can be easily seen by recalling the Fenchel duality 
\begin{equation}
  \label{eq:97}
  -\rmD_{v}\cR_h(u,v)=f\quad\Leftrightarrow\quad
  -\langle f,v\rangle=\|v\|_{u,h}\|f\|^*_{u,h}=
  \psi_h(\|v\|_{u,h})+\psi^*_h(\|f\|^*_{u,h}),
\end{equation}
where $\psi^*$ is the Legendre transform of $\psi$:
\begin{equation}
  \label{eq:99}
    \psi^*(f)=\sup_{v\ge 0}\Big(f\,v-\psi(v)\Big).
\end{equation}
A crucial feature of Fenchel duality is that for \emph{every} couple
$(v,f)\in \rmT_u\XX\times \rmT_u^*\XX$ one has the inequality
\begin{equation}
  \label{eq:133}
  -\langle f,v\rangle\le \|v\|_{u,h}\|f\|^*_{u,h}\le 
  \psi_h(\|v\|_{u,h})+\psi^*_h(\|f\|^*_{u,h}),
\end{equation}
so that in order to check the identity $-\rmD_{v}\cR_h(u,v)=f$ it is
sufficient to prove the opposite inequality, i.e.
\begin{equation}
  \label{eq:97opposite}
  -\langle f,v\rangle\ge
  \psi_h(\|v\|_{u,h})+\psi^*_h(\|f\|^*_{u,h})
  \quad\Rightarrow\quad
  -\rmD_{v}\cR_h(u,v)=f.
\end{equation}
Taking into account these remarks and observing that 
we have the chain rule
\begin{displaymath}
  -\langle
   \rmD_u\cE_h(t,u_h(t)),\dot u_h(t)\rangle=-\frac \d{\d
      t}\cE_h(t,u_h(t))+\frac\partial{ \partial t}\cE_h(t,u_h(t)),
\end{displaymath}
we deduce that $u_h$ solves \eqref{eq:86} if and only if
\begin{equation}
  \label{eq:96}
  \begin{aligned}
    -\frac \d{\d t}\cE_h(t,u_h(t))&+\frac\partial{ \partial
      t}\cE_h(t,u_h(t))\\&\ge \psi_h\big(\|\dot
    u_h(t)\|_{u_h,h}\big)+\psi_h^*\big(
    \|\rmD_u\cE_h(t,u_h(t))\|^*_{u_h,h}\big).
  \end{aligned}
\end{equation}
Since, as we already noticed in \eqref{eq:133}, the opposite
inequality is always true, we immediately see that it is sufficient to
impose the integrated version of \eqref{eq:96} in $(0,T)$:
\begin{equation}
  \label{eq:98T}
  \begin{aligned}
    \cE_h(T,u_h(T))&+\int_0^T \Big( \psi_h\big(\|\dot
    u_h(t)\|_{u_h,h}\big)+\psi_h^*\big(
    \|\rmD_u\cE_h(t,u_h(t))\|^*_{u_h,h}\big)\Big)\,\d t\\&\le
    \cE_h(0,\bar u_h)+\int_0^T \frac\partial{\partial
      t}\cE_h(t,u_h(t))\,\d t.
  \end{aligned}
\end{equation}
Now we can make the last step: instead of 
looking for curves satisfying \eqref{eq:98T}, we reinforce it by
replacing $\partial_t \cE_h$ and $\|\rmD_u\cE_h\|^*_{u_h,h}$ 
with a couple $\cP_h,\cF_h$ of irreversible upper gradients satisfying \eqref{eq:93}.
Since $\psi$ and $\psi^*$ are nondecreasing maps and \eqref{eq:94}
holds, it is immediate to see that if a curve $u\in \rmC^1([0,T];\XX)$
satisfies the $\psi$-$\psi^*$ energy-dissipation inequality 
\begin{equation}
  \label{eq:98FP}
  \begin{aligned}
    \cE_h(T,u_h(T))&+\int_0^T \Big(\psi_h\big(|\dot
    u_h|_{\sfd_h}(t)\big)+\psi_h^*\big(\cF_h(t,u_h(t))\big)\Big)\,\d t
    \\& \le \cE_h(0,\bar u_h)+\int_0^T \cP_h(t,u_h(t))\,\d t
  \end{aligned}
\end{equation}
(see also the next Definition \ref{def:ED}),
then it also satisfies \eqref{eq:98T} and by the argument above it
satisfies \eqref{eq:86}; moreover, along the curve we find a
posteriori
\begin{displaymath}
  \|\rmD_u\cE_h(t,u_h(t))\|^*_{u_h,h}=\cF_h(t,u_h(t)),\qquad
  \frac\partial{\partial
      t}\cE_h(t,u_h(t))=\cP_h(t,u_h(t))
\end{displaymath}
for every $t\in [0,T]$.
We thus have seen that \eqref{eq:98FP} for a couple $(\cF_h,\cP_h)$ of
irreversible upper gradients provides a natural metric definition of $\psi$-gradient
flow, which can be immediately extended to a metric framework.

\subsection*{Marginal functionals and conditional time derivative of
  the energy.}
In order to motivate the even more general definition of evolution system considered in 
Section \ref{subsec:metric}, where the power functional $\cP$ 
\emph{can also depend on a further variable $\sF$ satisfying the
  constraint $\sF(t)\ge \cF(t,u(t))$}, let us consider 
a non smooth situation, where $\cE$ is a \emph{marginal functional}:
it means that $
\cE$ results from a minimization of the form
\begin{equation}
  \label{eq:27}
  \cE(t,u):=\min_\eta\big\{\cI(t,u,\eta):\eta\in \KK\big\},
\end{equation}
where $\KK$ is a compact topological space and $\cI:\QQ\times\KK\to \R$ is a continuous function such that 
$\cI(\cdot,\cdot,\eta)\in \rmC^1(\QQ)$ for every $\eta\in \KK$ with
uniformly continuous derivatives.

Even if each single functional $\cI(\cdot,\eta)$ is regular, $\cE$ is
not $\rmC^1$ in general. Referring to \cite{Mielke-Rossi-Savare12} for
a more detailed discussion, we recall here that setting
\begin{equation}
  \label{eq:136}
  M(t,u):=\argmin\cI(t,u,\cdot)=\Big\{\eta\in \KK: \cI(t,u,\eta)=\cE(t,u)\Big\}
\end{equation}
it is natural to replace the smooth differential equation
\eqref{eq:86} with the differential inclusion in $[0,T]$
\newcommand{\rmmm}{{\scriptscriptstyle \rmm}}
\begin{equation}
  \label{eq:86marginal}
  \rmD_{\dot u} \cR_h(u_h(t),\dot u_h(t))+\rmD^{\rmmm}_u\cE(t,u_h(t))\ni 0\quad \text{in
  }\rmT^*\XX,
  %\ t\in [0,T],
  \quad
  u_h(0)=\bar u_h,
\end{equation}
where, just for the purposes of this section, $\rmD^\rmmm\cE$ denotes the so-called \emph{marginal
  differential} of $\cE_h$, i.e.
\begin{align*}
  \rmD^\rmmm\cE(t,u):=\Big\{(\sfp,\sfw)\in \R\times \rmT^*_u\XX:
  \sfp&=\partial_t \cI(t,u,\eta),\\
  \sfw&=\rmD_u\cI(t,u,\eta)\ \text{for some }\eta\in M(t,u)\Big\}
\end{align*}
and $\rmD^\rmmm_u\cE$ is its projection onto the second component,
\begin{equation}
  \label{eq:151}
  \rmD^\rmmm_u \cE(t,u):=\Big\{w\in
  \rmT^*_u\XX:w=\rmD_u\cI(t,u,\eta)\quad\text{for some }\eta\in M(t,u)\Big\}.
\end{equation}
If we want to differentiate the energy along a regular curve
$r\mapsto(\sft(r),\sfu(r))$ as in \eqref{eq:92} we get for a.a.\ $r$
\begin{equation}
  \label{eq:58}
  -\frac \d{\d r}\cE(\sft(r),\sfu(r))+\sfp(r)
  %(t,\sfu(r),\sfw(r))
  \,\dot \sft(r)=
  -\langle \sfw(r),\dot\sfu(r)\rangle\le 
  \|\sfw(r)\|_{\sfu,h}^*\,|\dot\sfu|_{\sfd_h}(r),
\end{equation}
where $(\sfp(r),\sfw(r))$ is an arbitrary selection in %$\in \rmT^*_{\sfu(r)}\XX$ 
%is an arbitrary selection in $\partial_u
%\cE(t,\sfu(r))$ 
$\rmD^\rmmm\cE(\sft(r),\sfu(r))$.
% and $\sfp(r)\in \R$ are arbitrary values satisfying
% \begin{equation}
%   \label{eq:59}
%   \sfw(r)=\rmD_u\cI(t,\sfu(r),\eta),\quad
%   \sfp(r)=\frac \partial{\partial t}\cI(t,\sfu(r),\eta)\quad\text{for
%     some } \eta\in
%   M(\sft(r),\sfu(r)).
%   % \ \text{such that } \sfw(r)=\rmD_u\cI(t,\sfu(r),\eta)
% \end{equation}
% \begin{equation}
%   \label{eq:59}
%   \sfp(r)=\frac \partial{\partial t}\cI(t,\sfu(r),\eta)\quad\text{for
%     some } \eta\in
%   M(t,u) \ \text{such that } \sfw(r)=\rmD_u\cI(t,\sfu(r),\eta)
% \end{equation}
Setting 
\begin{align}
  \label{eq:137}
  \cF_h(t,u):=&\min\Big\{\|\sfw\|_{u,h}:\sfw\in
  \rmD_u^\rmmm(t,u)\Big\},
  %\Big\{\|\rmD_u\cI(t,u,\eta)\|_{u,h}:\eta\in   M(t,u)\Big\},
  \\
  \label{eq:152}\cP_h(t,u,f):=&\max\Big\{\sfp:(\sfp,\sfw)\in \rmD^\rmm\cE(t,u),\
  \|\sfw\|_{u,h}\le f\Big\},
  %\Big\{\partial_t\cI(t,u,\eta):\eta\in M(t,u),\   \|\rmD_u\cI(t,u,\eta)\|_{u,h}\le f\Big\},
\end{align}
it is easy to check that for every $\sF(r)\ge \cF_h(\sft(r),\sfu(r))$ we
have
\begin{equation}
  \label{eq:147}
   -\frac \d{\d r}\cE(\sft(r),\sfu(r))+\cP_h(\sft(r),\sfu(r),\sF(r))\dot
   \sft(r)\le 
   \sF(r)\,|\dot\sfu|_{\sfd_h}(r).
\end{equation}
Conversely, if a curve $[0,T]\ni t\mapsto u_h$  satisfies the $\psi_h$
energy-dissipation inequality
\begin{equation}
  \label{eq:148}
  -\frac \d{\dt}\cE(t,u(t))+\cP_h(t,u(t),\sF_h(t))
  \ge
   \psi_h\big(|\dot u|_{\sfd_h}(t)\big)+\psi_h^*\big(\sF_h(t)\big)
\end{equation}
for a.a.\ $t\in (0,T)$ and for some $\sF_h(t)\ge \cF_h(t,u_h(t))$, we get 
% $w_h(t)\in \partial_u
% \cE(t,u_h(t))$ so that 
% \begin{equation}
%   \label{eq:149}
%   \begin{aligned}
%     \|w_h(t)\|_{u,h}^*&\,|\dot
%     u_h|_{\sfd_h}(t)-\cP_h(t,u_h(t),w_h(t))\\&=\min\Big\{f\in \partial_u\cE(t,u_h(t)):
%     \|f\|_{u,h}^*\,|u_h|_{\sfd_h}(t)-\cP_u(t,u_h(t),f)\Big\}
%   \end{aligned}
% \end{equation}
by \eqref{eq:58} and \eqref{eq:147} 
\begin{gather*}
  %\label{eq:150}
  -\langle \sfw,\dot
  u_h(t)\rangle -\sfp+\cP_h(t,u(t),\sF_h(t))\ge 
    \psi_h\big(|\dot u|_{\sfd_h}(t)\big)+\psi_h^*\big(\sF_h(t)\big)
    \\\text{for every }
    (\sfp,\sfw)\in \rmD^\rmmm\cE(t,u_h(t)).
\end{gather*}
Choosing in particular a couple $(\bar \sfp,\bar\sfw)$ attaining the
maximum in \eqref{eq:152} for $f:=\sF_h(t)$, we obtain
\begin{displaymath}
   -\langle \bar\sfw,\dot
  u_h(t)\rangle\ge \psi_h\big(|\dot
  u|_{\sfd_h}(t)\big)+\psi_h^*\big(\sF_h(t)\big)
  \ge \psi_h\big(|\dot
  u|_{\sfd_h}(t)\big)+\psi_h^*\big(\bar\sfw\big),
\end{displaymath}
which eventually yields by \eqref{eq:97}
\begin{displaymath}
  -\rmD_{\dot u}\cR_h(u_h(t),\dot u_h(t))=\bar\sfw\in
  \rmD_u^\rmmm\cE(t,u_h(t)),\quad
  \sF_h(t)=\|\bar \sfw\|_{u_h,h},
\end{displaymath}
so that $u_h$ solves \eqref{eq:86marginal}.

\subsection*{Towards a general form of chain rule and
  energy-dissipation inequalities.}
Notice that we were able to formulate the non-smooth differential
inclusion \eqref{eq:86marginal} in a metric variational form by
looking for a
chain-rule inequality with the more general structure given by \eqref{eq:147}: this
will be reflected in the definition \ref{def:upper-gradients}
 of irreversible upper gradients.

The differential inclusion is then characterized by 
the $\psi_h$ energy-dissipation inequality \eqref{eq:148}:
its metric formulation will be considered in Definition
\ref{def:ED} in the superlinear case and in Definition \ref{def:EDBV} in
the case of a metric dissipation $\psi$ with linear growth.
% one can see that 
% $(\XX,\sfd,\cE,\cF,\cP)$ is an upper gradient evolution system.
% $\psi_h$-gradient flows in the superlinear case can be obtained by
% applying the results of \cite{Mielke-Rossi-Savare12}.

It is then natural to investigate the stability of inequality \eqref{eq:98T}
with respect to perturbations of the parameter $h$. 
One of the most difficult points is to guess how to state 
\eqref{eq:98FP} when 
the metric dissipation functional $\psi$ has only a linear growth, and
therefore one expects a solution in $\BV{0,T}{(\XX,\sfd)}$.
We have already discussed in the introduction the case of a piecewise
smooth curve, but a robust theory should allow for general BV curves,
possibly exhibiting countably many jumps and a metric derivative with a
singular Cantor part.
The correct treatment of this case will be discussed in the next section.

\section{The metric setting and preliminary results}
\label{sec:preliminaries}

\subsubsection*{Complete extended distances}
Let $\XX$ be a given set;
an \emph{extended distance} on $\XX$ is a map $\sfd:\XX\times\XX\to
[0,\infty]$ satisfying
\begin{align*}
  &\sfd(x,y)=0&&\quad\text{if and only if }x=y,\\
  &\sfd(x,y)=\sfd(y,x)&&\quad\text{for every }x,y\in \XX,\\
  &\sfd(x,z)\le \sfd(x,y)+\sfd(y,z)&&\quad\text{for every }x,y,z\in \XX.
\end{align*}
We say that $(\XX,\sfd)$ is an extended metric space. Most of the definitions concerning metric spaces generalize verbatim to extended metric spaces,
in particular it makes perfectly sense to speak about a complete extended metric space.

\subsection{BV, absolutely continuous curves, and metric
  derivative}
Let $(\XX,\sfd)$ be an extended metric space. 
\begin{definition}[Absolutely continuous curves and metric
  derivatives]
  \label{def:metric-derivative}
  %Let\\ $p\in [1,\infty]$. 
  We say that a curve $u:[a,b]\to \XX$ is absolutely continuous (a.c.\
  for short) and belongs to
  $\AC {}{a,b}{(\XX,\sfd)}$ %(we omit the index $p$ in the case $p=1$)
  if there exists $m\in L^1(a,b)$ such that 
  \begin{equation}
    \label{eq:2}
    \sfd(u(s),u(t))\le \int_s^t m(r)\,\d r\quad\text{for every }a\le
    s<t\le b.
  \end{equation}
  If $u\in \AC {}{a,b}{(\XX,\sfd)}$ then the limit
  \begin{equation}
    \label{eq:3}
    |\dot
    u|_{\sfd}(t):=\lim_{\tau\down0}\frac{\sfd(u(t+\tau),u(t))}{|\tau|}\quad\text{exists
      for $\Leb 1$-a.a.\ $t\in (a,b)$},
  \end{equation}
  it satisfies $|\dot u|_\sfd\le m$ $\Leb 1$-a.e.\ in $(a,b)$, belongs
  to $L^1(a,b)$, and it is called \emph{metric derivative of $u$;} 
  $|\dot u|_\sfd$
  provides the minimal function $m$ such that \eqref{eq:2} holds.
\end{definition}
The (pointwise) $\sfd$-variation of $u:[a,b]\to \XX$ in an interval
$[\alpha,\beta]\subset [a,b]$ is defined by
\begin{equation}
  \label{eq:8}
  \mathrm{Var}_\sfd(u;[\alpha,\beta]):=\sup\Big\{\sum_{j=1}^n
  \sfd(u(t_j),u(t_{j-1})):\alpha=t_0<\cdots<t_n=b\Big\}.
\end{equation}
We say that $u\in \BV{a,b}{(\XX,\sfd)}$ if
$\mathrm{Var}_\sfd(u;[a,b])<\infty$ and $u$ takes values
in a complete subset of $(\XX,\sfd)$; in this case, 
$u$ admits left and right limits (denoted by $u(t_-)$ and
$u(t_+)$) at every point of $[a,b]$ and we adopt the convention to
extend
$u$ to $\R\setminus[a,b]$ by setting
\begin{equation}
  \label{eq:140}
  u(t):=
  \begin{cases}
    u(a)&\text{if }t<a,\\
    u(b)&\text{if }t>b,
  \end{cases}
  \quad\text{so that}\quad
  u(a_-):=u(a),\ u(b_+):=u(b).
\end{equation}
The \emph{pointwise jump set} and the \emph{essential jump set} of $u$
are 
defined by
\begin{equation}
  \label{eq:9}
  \begin{aligned}
    \rmJ_u:=&\big\{t\in [a,b]:u(t)\neq u(t_-)\ \text{or }u(t)\neq
    u(t_+)\big\}
    \\
    \essJ_u:=&\big\{t\in [a,b]:u(t_-)\neq
    u(t_+)\big\},
  \end{aligned}
\end{equation}
and satisfy the obvious inclusion $\essJ_u\subset \rmJ_u$.
If $u\in \BV{a,b}{(\XX,\sfd)}$ we denote by $\rmV_u:\R\to [0,\infty)$
the bounded monotone function
\begin{equation}
  \label{eq:10}
  \rmV_u(t):=
  \begin{cases}
    0&\text{if }t< a,\\
    \Var_\sfd(u;[a,t])&\text{if }t\in [a,b],\\
    \Var_\sfd(u;[a,b])&\text{if }t>b,
  \end{cases}
\end{equation}
and by $\mmd u\sfd$ its distributional derivative: $\mmd u\sfd$ is a finite
measure in $\R$ supported in $[a,b]$, and we can decompose it 
as the sum of a diffuse part and a jump part
\begin{equation}
  \label{eq:11}
  \begin{gathered}
    \mmd u\sfd=\comd u\sfd+\jmd u\sfd,\quad \jmd u\sfd=\mmd u\sfd\res
    \rmJ_u,
    \\
    \comd u\sfd(\{t\})=0\quad\forevery t\in \R,
  \end{gathered}
\end{equation}
where $\res$ denotes the restriction of a measure to a Borel set; 
thus $\jmd u\sfd$ is concentrated on the (at most) countable jump set
$\rmJ_u$ and
\begin{equation}
  \label{eq:13}
  \jmd u\sfd(\{t\})=\sfd(u(t_-),u(t))+\sfd(u(t),u(t_+))\quad\forevery y\in \rmJ_u.
\end{equation}
The Lebesgue decomposition of the diffuse part $\comd u\sfd$ can be
written as 
\begin{equation}
  \label{eq:113}
  \comd u\sfd=|\dot u|_\sfd\,\Leb 1+\Cmd u\sfd,\quad \text{with $|\dot
    u|_\sfd$ given by \eqref{eq:3} and the Cantor part }\Cmd u\sfd\perp\Leb 1.
\end{equation}
We obtain
\begin{align}
  \label{eq:12}
    \Var_\sfd(u;[\alpha,\beta])&=\int_\alpha^\beta \d\,\comd u\sfd+\Jmp\sfd(u;[\alpha,\beta])\\&=
    \int_\alpha^\beta |\dot u|_\sfd(t) \,\d t+ \int_\alpha^\beta \d\,\Cmd u\sfd+\Jmp\sfd(u;[\alpha,\beta])
\end{align}
where for every subinterval $[\alpha,\beta]\subset [a,b]$ 
\begin{displaymath}
  \Jmp\sfd(u;[\alpha,\beta]):=\sfd(u(\alpha),u(\alpha_+))+\sum_{t\in
    \rmJ_u\cap (a,b)}\jmd u\sfd(\{t\})+\sfd(u(\beta_-),u(\beta)).
\end{displaymath}
\subsection{Metric evolution systems, irreversible upper gradients and
  $\psi$-gradient flows}
\label{subsec:metric}
Let $(\XX,\sfd)$ be a complete extended metric space and $[0,T]$ a
fixed time interval of $\R$. We denote by $\QQ$ the product space
$[0,T]\times \XX$ and we say that an a.c.\ curve 
$\sfq=(\sft,\sfu):[\alpha,\beta]\to \QQ$ is \emph{time-ordered} if 
$\sft$ is
non decreasing. 

If $I$ is some interval of $\R$, $\rmB_+(I)$ (resp.\ $\rmM_+(I)$) 
will denote the collections of Borel (resp.\ $\Leb 1$-measurable) maps defined in
$I$ with values in $[0,+\infty]$.
We say that a map $G:\QQ\to \overline\R=\R\cup\{\pm\infty\}$ 
is measurable along time-ordered a.c.\ curves if for every 
time-ordered a.c.\ curve $\sfq$ in $\QQ$ the composition $G\circ\sfq$
is Lebesgue measurable.
We denote by $\rmM(\QQ)$ the collection of all such functions.

An \emph{evolution system} $(\XX,\sfd,\cE,\cF ,\cP)$ consists of 
\begin{enumerate}
\item a complete extended metric space $(\XX,\sfd)$, 
\item an energy
  functional $\cE:\QQ\to \R\cup\{+\infty\}$ in $\rmM(\QQ) $,
\item a slope functional
  $\cF :\QQ\to [0,\infty]$ in $\rmM(\QQ)$,
\item a power functional $\cP:\QQ\times[0,\infty]\to
  \overline \R$ such that for every $(q,f)\in \QQ\times[0,\infty)$
  the map
  $\cP(\cdot, f)$ belongs to $\rmM(\QQ)$ %is measurable along time-ordered a.c.\ 
  and the map
  $\cP(q,\cdot)$ is nondecreasing and upper semicontinuous.
\end{enumerate}
Notice that if $\sfq=(\sft,\sfu):[\alpha,\beta]\to\QQ$ is a time-ordered a.c.\
curve and $\sF\in \rmM_+([\alpha,\beta])$, the composition
$s\mapsto \cP(\sfq(s),\sF(s))$ is measurable.

% $\cF $ will often stand for its metric slope
% \begin{equation}\label{eq:38}
%   \cF (t,u):=\limsup_{w\to
%     u}\frac{[\cE(t,u)-\cE(t,w)]_+}{\sfd(w,u)}\quad\text{when }\cE(t,u)<\infty
% \end{equation}
% extended to $+\infty$ when $\cE(t,u)=+\infty$, or a suitable
% lower-semicontinuous relaxation, 
% and $\cP(t,u)$ will be the partial derivative of the energy with respect
% to time: $\cP(t,u):=\partial_t \cE(t,u)$, at least when $\cE(t,u)<\infty$.

The essential feature of this structure is 
captured by the following definition:
\begin{definition}[Irreversible upper gradients for time-dependent
  functionals]
  \label{def:upper-gradients}
  We
  say that $(\XX,\sfd,\cE,\cF ,\cP)$ is an (irreversible) upper
  gradient system if for every time-ordered a.c.\ curve
  $[\alpha,\beta]\ni s\mapsto \sfq(s)=(\sft(s),\sfu(s))\in \QQ$ 
  and every $\sF\in \rmM_+([\alpha,\beta])$ satisfying
  \begin{displaymath}
    \cE({\sft(\alpha)},\sfu(\alpha))<\infty,\quad 
    \sF\ge \cF\circ \sfq\quad\text{in }[\alpha,\beta],\quad
    \int_\alpha^\beta \big[\cP(\sfq(s),\sF(s))\big]_-\dot\sft(s)\,\d
    s<\infty%\circ(\sfq,\sF))_-\,\dot \sft\in L^1(\alpha,\beta),
  \end{displaymath}
  there holds
  \begin{equation}
    \label{eq:4}
    \cE(\sfq(\alpha))+
    \int_{\alpha}^{\beta} \cP(\sfq(s),\sF(s))\dot\sft(s)\,\d s\le 
    \cE({\sfq(\beta)})+\int_{\alpha}^{\beta}
    \sF(s)%\cF ({\sfq(s)})
    \,|\dot\sfu|(s)\,\d s.
  \end{equation}
\end{definition}
A \emph{metric dissipation function} is a 
function
\begin{equation}
  \label{eq:1}
  %\text{convex function}\quad
  \psi:[0,\infty)\to [0,\infty)\ \text{convex, with }\psi(0)=0,\quad
  \sfL=\lim_{v\to+\infty}\frac{\psi(v)}v>0.
\end{equation}
We say that $\psi$ has $\sfL$-linear growth if $\sfL<+\infty$ and that 
$\psi$ is \emph{superlinear} if $\sfL=+\infty$. 
Its dual $\psi^*:[0,\infty)\to [0,\infty]$ is defined as 
\begin{equation}
  \label{eq:39}
  \psi^*(f)=\sup_{v\ge 0}\Big(f\,v-\psi(v)\Big),
\end{equation}
and it is a convex and superlinear function with $\psi^*(0)=0$ as
well, satisfying the Fenchel duality
\begin{equation}
  \label{eq:40}
  \begin{gathered}
    \psi(v)+\psi^*(f)\ge f\,v\quad \forevery v,f\in [0,\infty);\\
    \psi(v)+\psi^*(f)= f\,v\quad\Leftrightarrow\quad
    f\in \partial \psi(v)\quad \Leftrightarrow\quad
    \psi'(v_-)\le f\le \psi'(v_+)
  \end{gathered}
\end{equation}
where $\partial\psi(v)=[\psi'(v_-),\psi'(v_+)]$ denotes the convex
subdifferential of $\psi$. 
Notice that at $v=0$ we have
\begin{displaymath}
  \partial\psi(0)=[0,\psi'(0_+)]
\end{displaymath}
so that $\partial\psi(0)$ is single valued only when the right
derivative of $\psi$ at $0$ vanishes.

The proper domain $D(\psi^*):=\{f\in
[0,\infty):\psi^*(f)<\infty\}$ 
is related to $\sfL$ by the relation
\begin{equation}
  \label{eq:116}
  \sfL=\sup\{f:\psi^*(f)<\infty\},
\end{equation}
so that $\psi^*$ is finite in $[0,\infty)$ if and only if $\psi$ is superlinear.
The typical examples are
\begin{displaymath}
  \psi(v)=\frac 1p v^p,\quad
  \psi^*(f)=\frac1{p^*}f^{p^*},\quad
  \partial\psi(v)=v^{p-1};\qquad
  p>1,\quad 
  \frac 1p+\frac1{p^*}=1;
\end{displaymath}
\begin{displaymath}
  \psi(v)=\sfL v,\quad \psi^*(f)=  \begin{cases}
    0&\text{if }f\le \sfL,\\
    +\infty&\text{if }f>\sfL.
  \end{cases}
\end{displaymath}
The $\psi$-gradient flows associated with an evolution system can be
characterized by a simple family of dissipation inequalities:
\begin{definition}[Energy-dissipation inequality]
  \label{def:ED}
  Let $(\XX,\sfd,\cE,\cF ,\cP)$ be an evolution system and let $\psi$ be a
  metric dissipation function. \\
  A curve $u\in \AC {}{[0,T]}{(\XX,\sfd)}$ with $\cE(0,u(0))<\infty$ 
  satisfies the $\psi$-$\psi^*$
  energy-dissipation inequality
  if there exists a measurable map $\sF\in \rmM_+([0,T])$ 
  satisfying
  \begin{equation}
    \label{eq:29}
    \sF(t)\ge \cF(t,u(t))\quad\text{in $[0,T]$},\qquad
    \int_0^T \big[\cP(t,u(t),\sF(t))\big]_+\,\d t<\infty,
  \end{equation}
  and 
  for every $t\in [0,T]$
  \begin{equation}
    \label{eq:5}
    \begin{aligned}
      \cE(t,u(t))&+\int_{0}^{t} \Big(\psi\big(|\dot
      u|_\sfd(r)\big)+\psi^*\big(\sF(r)\big)\Big) \,\,\d r \\&\le
      \cE(0,u(0))+ \int_{0}^{t} \cP(r,u(r),\sF(r))\,\d r.
    \end{aligned}
  \end{equation}
\end{definition}
It is immediate to see that if $(\XX,\sfd,\cE,\cF ,\cP)$ is an \emph{upper
gradient} evolution system, then by \eqref{eq:4} and \eqref{eq:40} 
the integral characterization \eqref{eq:5} is equivalent to 
the following properties:
\begin{subequations}
\label{eq:sub1}
  \begin{align}
    \label{eq:6}
    &t\mapsto \cE(t,u(t))\quad\text{is absolutely continuous in
        $[0,T]$,}
      \\& 
      \label{eq:6bis}
      \sF(t)\ge \cF(t,u(t))\quad\text{a.a.\ $t$ in
      }(0,T),
      \\&
    \label{eq:7}
    \begin{aligned}
      -\frac\d{\d t}\cE(t,u(t))&+\cP(t,u(t),\sF(t))=|\dot
      u|_\sfd(t)\,\sF (t)\\&=\psi\big(|\dot
      u|_\sfd(t)\big)+\psi^*\big(\sF (t)\big)
    \end{aligned}
    \qquad\forae\ t\in (0,T).
  \end{align}
\end{subequations}
Notice that \eqref{eq:7} and \eqref{eq:40} yields the velocity-slope relation
\begin{equation}
  \label{eq:73}
  \sF(t)\in \partial\psi\big(|\dot u|_\sfd(t)\big)\quad\text{for a.a.\
    $t\in (0,T),$}
\end{equation}
and, by \eqref{eq:4}, $\sF(t)$ realizes the minimal selection property
\begin{equation}
  \label{eq:138}
  |\dot u|_\sfd(t)\,\sF(t)\
  % \psi^*\big(\sF(t)\big)
  -\cP(t,u(t),\sF(t))=\min_{f \ge
  \cF(t,u(t))}
%\Big( 
|\dot u|_\sfd(t)\,f-\cP(t,u(t),f)
%  \Big)
\end{equation}
for a.a.\ $t\in (0,T)$.
In particular, \eqref{eq:138} 
yields
\begin{equation}
  \label{eq:139}
  \sF(t)=\cF(t,u(t))\quad\text{$|\dot u|_\sfd\,\Leb 1$-a.e.\ in }(0,T) \quad
  \text{when  $\cP$ is independent of $\sF$},
\end{equation}
and \eqref{eq:139} holds $\Leb 1$-a.e.\ when $\psi'(0+)=0$.
\begin{definition}[$\psi$-gradient flows]
  \label{def:GF}
  Let $(\XX,\sfd,\cE,\cF ,\cP)$ be an upper
  gradient evolution system, and let $\psi$ be a metric dissipation
  function as in \eqref{eq:1}.
  A curve $u\in \AC {}{a,b}{(\XX,\sfd)}$ is a $\psi$-gradient flow of the
  system if it satisfies \eqref{eq:29} and \eqref{eq:5} at $t=T$, or,
  equivalently, (\ref{eq:sub1}a,b,c).
  %and \eqref{eq:7}.
\end{definition}

%% COMMENT
\subsection{BV solutions to evolution systems}
Let us now consider the case of a dissipation potential $\psi$ with linear
growth, 
corresponding to $\sfL<\infty$ in \eqref{eq:1}. In this case,
absolutely continuous solutions to (\ref{eq:sub1}a,b,c) often do 
not exist, even in the smooth and finite-dimensional setting of
Section \ref{sec:intro} and
therefore we have to extend the previous definitions to the BV
setting.

\renewcommand{\fre}{\frf}
As before, we fix the time interval $[0,T]$
and we denote by $\QQ$ the product space $[0,T]\times \XX$
and we consider a function $\fre:\QQ\to[0,\infty]$ 
measurable along absolutely continuous curves. Relevant examples will
be
$\fre:=\cF $ and 
\begin{equation}
  \label{eq:42}
  \frf(q):=\cF (q)\lor \sfL\quad\forevery q\in \QQ.
\end{equation}
%$\fre:=\cF \lor 1$.
% For a parameter $\finp\ge0$ (the most important cases we will
% consider are $\finp=0$ and $\finp=1$), we introduce 
% the ``Finsler'' functions 
% \begin{equation}
%   \label{eq:15}
%   \frfa:\QQ\times [0,\infty)\to [0,\infty),\quad
%   \frfa(t,\theta,v):=(\cF _t(\theta)\lor \finp)\,v =\finp \,v+v(\cF_t(\theta)-\finp)_+;
% \end{equation}
% interpreting $v$ as the metric velocity of a given curve
% $\vartheta\in \AC{}{r_0,r_1}{(\XX,\sfd)}$, 
We interpret $\fre(t,\cdot)$ as a conformal factor that induces a modified geometry in $\XX$:
the corresponding length of a curve $\vartheta\in \AC{}{r_0,r_1}{(\XX,\sfd)}$ (notice that the curve is parametrized by a
different variable $r$, and $t$ remains fixed) is 
\begin{equation}
  \label{eq:17}
  \finsl {\sfd,\fre} t\vartheta:=\int_{r_0}^{r_1} \fre(t,\vartheta(r))\,|\dot\vartheta(r)|_\sfd\,\d r
\end{equation}
and the cost of a transition from $u_0$ to $u_1$ in $\XX$ at the time
$t\in [0,T]$ is then defined by
\begin{displaymath}
  \bicost {\sfd,\fre} t{u_0,u_1}=\inf\Big\{\finsl {\sfd,\fre} t\vartheta:\vartheta\in
  \AC{}{r_0,r_1}{(\XX,\sfd)},\ \vartheta(r_i)=u_i\Big\}.
\end{displaymath}
We also set
  \begin{align*}
    \tricost {\sfd,\fre} t{u_0,u,u_1}:&=\bicost {\sfd,\fre} t{u_0,u}+\bicost{\sfd,\fre} t{u,u_1}\\&
    \begin{aligned}
      =\inf\Big\{&\finsl{\sfd,\fre} t\vartheta:\vartheta\in
      \AC{}{r_0,r_1}{(\XX,\sfd)},\\& \vartheta(r_i)=u_i,\
      \vartheta(r)=u\text{ for some }r\in [r_0,r_1]\Big\}.
    \end{aligned}
  \end{align*}
We can thus consider a modified Jump functional
% \begin{equation}
%   \label{eq:18}
%   \pVar_{\sfd,\fre}(u;[\alpha,\beta])=\int_\alpha^\beta \d\,\comd u\sfd+
%   \pJmp{\sfd,\fre}(u;[\alpha,\beta])
% \end{equation}
% where
\begin{align*}
  \pJmp{\sfd,\fre}(u;[\alpha,\beta])&=
  \bicost{\sfd,\fre} \alpha{u(\alpha),u(\alpha_+)}\\&+\sum_{t\in
    \rmJ_u}\tricost{\sfd,\fre} t{u(t_-),u(t),u(t_+)}+
  \bicost{\sfd,\fre} \beta{u(\beta_-),u(\beta)}.
\end{align*}
The previous quantities will be quite useful to extend the chain-rule
inequality \eqref{eq:4} to
the BV setting. 
Notice that we are assuming that $\sF$ is a Borel map (instead of Lebesgue
measurable as in Definition \ref{def:upper-gradients}), since an integration with respect to the possibly
singular measure $\comd u\sfd$ occurs in \eqref{eq:46}.
\begin{proposition}
  \label{prop:BV-chain-rule}
  Let $(\XX,\sfd,\cE,\cF ,\cP)$ be an upper gradient evolution system,
  $\frf:=\cF \lor \sfL$ for some $\sfL>0$, and
  let
  $u\in \BV{0,T}{(\XX,\sfd)}$ satisfy
  \begin{displaymath}
    \cE(0,u(0))<\infty,\quad \int_0^T (\cP(t,u(t),\sF(t)))_-\, \d
    t<\infty,\quad
    \pJmp{\sfd,\frf}(u;[0,T])<\infty,
  \end{displaymath}
  for some Borel map $\sF\in \rmB_+([0,T])$ %\to \R$ 
  with $\sF(t)\ge
  \cF(t,u(t))$ in $[0,T]$.
  Then for every $t\in [0,T]$ 
  \begin{equation}
    \label{eq:46}
    \begin{aligned}
      \cE(0,u(0))&+\int_0^t \cP(r,u(r),\sF(r))\,\d r\\&\le
      \cE(t,u(t))+\int_0^t \sF(r)\,\d \comd u\sfd +\pJmp{\sfd,\cF
      }(u;[0,t]).
    \end{aligned}
  \end{equation}
\end{proposition}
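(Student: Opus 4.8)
The plan is to deduce the BV inequality \eqref{eq:46} from the absolutely continuous upper-gradient inequality \eqref{eq:4} by a single global reparametrization that \emph{unfolds} the curve $u$ into a time-ordered a.c.\ curve in $\QQ$, filling in the (at most countably many) jumps by near-optimal transition paths. The crux is that $u$ itself is only $\BV{0,T}{(\XX,\sfd)}$ and its metric derivative may carry a Cantor part, so it cannot be fed directly into \eqref{eq:4}; one must slow down time by the variation so that the resulting curve becomes Lipschitz, while the singular Cantor increments of the variation get spread out into a genuine metric speed.

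\emph{Construction.} Fix $t\in[0,T]$ and small parameters $\eps,\delta>0$. Enumerate $\rmJ_u\cap[0,t]=\{\tau_1,\tau_2,\dots\}$ and for each $\tau=\tau_n$ choose an a.c.\ transition path $\vartheta_\tau$ joining $u(\tau_-)$ to $u(\tau)$ to $u(\tau_+)$ (with the obvious one-sided variant when $\tau\in\{0,t\}$), parametrized by $\sfd$-arclength, which is $\eps 2^{-n}$-optimal for the $(\cF+\delta)$-length, i.e.\ $\finsl{\sfd,\cF+\delta}{\tau}{\vartheta_\tau}\le \tricost{\sfd,\cF+\delta}{\tau}{u(\tau_-),u(\tau),u(\tau_+)}+\eps 2^{-n}$. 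Writing $\ell_\tau$ for the $\sfd$-length of $\vartheta_\tau$, define the strictly increasing function $\sfs(r):=r+\comd u\sfd([0,r])+\sum_{\tau\le r}\ell_\tau$ on $[0,t]$ (continuous except for jumps of size $\ell_\tau$ at each $\tau$), set $S:=\sfs(t)$, let $\hat\sft$ be the unique $1$-Lipschitz nondecreasing function with $\hat\sft(\sfs(r))=r$, and let $\hat\sfu$ equal $u(\hat\sft(s))$ off the jump gaps $(\sfs(\tau_-),\sfs(\tau_-)+\ell_\tau)$ and run through $\vartheta_\tau$, at frozen time $\hat\sft\equiv\tau$, inside each gap. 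Since $\ell_\tau\ge \jmd u\sfd(\{\tau\})$ gives $\Var_\sfd(u;[r_1,r_2])\le \sfs(r_2)-\sfs(r_1)$, both $\hat\sft$ and $\hat\sfu$ are $1$-Lipschitz; hence $\hat\sfq=(\hat\sft,\hat\sfu)$ is a time-ordered a.c.\ curve with $\hat\sfq(0)=(0,u(0))$ and $\hat\sfq(S)=(t,u(t))$. Taking $\hat\sF(s):=\sF(\hat\sft(s))$ off the gaps and $\hat\sF(s):=\cF(\tau,\vartheta_\tau(\cdot))$ inside the gap of $\tau$ produces a Borel map with $\hat\sF\ge\cF\circ\hat\sfq$, and the hypotheses $\cE(0,u(0))<\infty$ and $\int_0^t[\cP(r,u(r),\sF(r))]_-\,\d r<\infty$ furnish exactly the admissibility conditions of Definition \ref{def:upper-gradients} (recall $\dot{\hat\sft}=0$ on every gap).

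\emph{Identification of the three terms.} Applying \eqref{eq:4} to $\hat\sfq,\hat\sF$ on $[0,S]$ gives $\cE(0,u(0))+\int_0^S\cP(\hat\sfq,\hat\sF)\dot{\hat\sft}\,\d s\le \cE(t,u(t))+\int_0^S\hat\sF\,|\dot{\hat\sfu}|\,\d s$. Because $\dot{\hat\sft}=0$ on each gap and $\hat\sft$ is monotone and a.c., the substitution $r=\hat\sft(s)$ turns the left-hand integral into $\int_0^t\cP(r,u(r),\sF(r))\,\d r$. On the right, the continuous part contributes $\int_0^t\sF(r)\,\d\comd u\sfd$, since $\hat\sft$ pushes $|\dot{\hat\sfu}|\,\Leb 1$, restricted to the complement of the gaps, forward to the diffuse variation measure $\comd u\sfd$ (here Borel measurability of $\sF$ is essential, $\comd u\sfd$ being possibly singular), while the gap of $\tau$ contributes exactly $\finsl{\sfd,\cF}{\tau}{\vartheta_\tau}$ because $\vartheta_\tau$ has unit $\sfd$-speed and $\hat\sF=\cF$ there. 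Collecting,
\[
\cE(0,u(0))+\int_0^t\cP(r,u(r),\sF(r))\,\d r\le \cE(t,u(t))+\int_0^t\sF\,\d\comd u\sfd+\sum_\tau\finsl{\sfd,\cF}{\tau}{\vartheta_\tau}.
\]

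\emph{Main obstacle and conclusion.} The delicate point, and the reason the hypothesis is stated with $\frf=\cF\lor\sfL$ and $\sfL>0$, is that a valid single a.c.\ curve forces $\sum_\tau\ell_\tau<\infty$, yet the $\cF$-cost alone controls no lengths (where $\cF$ vanishes a path may wander arbitrarily far for free), so $\cF$-optimal transitions need not be summable in length. The $(\cF+\delta)$-selection above resolves this: from $\delta\,\ell_\tau\le \finsl{\sfd,\cF+\delta}{\tau}{\vartheta_\tau}$ and $\cF+\delta\le(1+\delta/\sfL)\,\frf$ one gets $\sum_\tau\ell_\tau\le \delta^{-1}\big((1+\delta/\sfL)\,\pJmp{\sfd,\frf}(u;[0,t])+\eps\big)<\infty$ for each fixed $\delta>0$, making the construction legitimate, while still $\finsl{\sfd,\cF}{\tau}{\vartheta_\tau}\le \tricost{\sfd,\cF+\delta}{\tau}{u(\tau_-),u(\tau),u(\tau_+)}+\eps 2^{-n}$. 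Letting $\eps\downarrow0$ and then $\delta\downarrow0$, and using $\tricost{\sfd,\cF+\delta}{\tau}{\cdots}\downarrow\tricost{\sfd,\cF}{\tau}{\cdots}$ together with the summable majorant $\tricost{\sfd,\cF+1}{\tau}{\cdots}\le(1+1/\sfL)\,\tricost{\sfd,\frf}{\tau}{\cdots}$, dominated convergence yields $\sum_\tau\finsl{\sfd,\cF}{\tau}{\vartheta_\tau}\to\pJmp{\sfd,\cF}(u;[0,t])$, which is precisely \eqref{eq:46}. What remains is routine bookkeeping: the continuous $1$-Lipschitz gluing of $\hat\sfq$ at the gap endpoints, the Borel measurability of $\hat\sF$, and the two changes of variables through the monotone map $\hat\sft$.
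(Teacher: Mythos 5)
Your proof is correct and follows the same overall architecture as the paper's: reparametrize $[0,t]$ by (essentially) $r\mapsto r+\rmV_u(r)$ so that the BV curve unfolds into a time-ordered absolutely continuous curve in $\QQ$, fill the jump gaps with near-optimal transitions at frozen time, apply the upper-gradient inequality \eqref{eq:4} to the unfolded curve, and undo the change of variables. The two arguments diverge only at the one genuinely delicate point, which you correctly isolated: $\cF$-near-optimal transitions carry no length bound, so one cannot insert infinitely many of them and keep the unfolded curve absolutely continuous. The paper resolves this with a mixed selection --- transitions that are $\eps 2^{-n}$-optimal for the $\cF$-cost on the first $N$ jumps and for the $\frf$-cost (whose value dominates $\sfL$ times the length) on the tail --- and then lets $N\up\infty$ before $\eps\down0$, using that $\sum_{n>N}\tricost{\sfd,\frf}{t_n}{\cdots}\to0$. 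You instead perturb the conformal factor to $\cF+\delta$, which yields the summable length bound $\ell_\tau\le\delta^{-1}\bigl((1+\delta/\sfL)\tricost{\sfd,\frf}{\tau}{\cdots}+\eps 2^{-n}\bigr)$ and hence a uniformly Lipschitz unfolded curve, and you remove $\delta$ at the end via $\tricost{\sfd,\cF+\delta}{\tau}{\cdots}\down\tricost{\sfd,\cF}{\tau}{\cdots}$ (exchange of the infima over $\theta$ and over $\delta$, each competitor having finite length) with the summable majorant $(1+1/\sfL)\tricost{\sfd,\frf}{\tau}{\cdots}$. Both devices are legitimate; yours costs one extra limiting parameter but avoids the two-family bookkeeping and keeps the unfolded curve Lipschitz rather than merely absolutely continuous. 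The steps you label routine are indeed the ones the paper also leaves implicit, but one deserves a line in a full write-up: on the complement of the gaps the correct pointwise bound is $|\dot{\hat{\sfu}}|_{\sfd}(s)\le 1-\dot{\hat{\sft}}(s)$, not merely $\le 1$, and it is precisely this refinement that makes the push-forward of $\hat{\sF}\,|\dot{\hat{\sfu}}|_{\sfd}\,\Leb 1$ land on $\sF\,\comd u\sfd$ rather than on $\sF\,(\Leb 1+\comd u\sfd)$.
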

\begin{proof}
  It is not restrictive to assume $t=T$. Let us denote by $(t_n)_n$ the jump set
  $\rmJ_u$ of $u$ and let us first set
  \begin{displaymath}
    \begin{gathered}
      \sfs(t):=t+\rmV_u(t),\quad \sfS:=T+\rmV_u(T),\quad
      I_n:=(\sfs(t_{n\,-}),\sfs(t_{n\,+})),\quad I:=\cup_n I_n,\\
      D:=[0,\sfS]\setminus I,\quad \sft:=\sfs^{-1}: D \to [0,T],\quad
      \sfu:=u\circ\sft:D\to \XX.
    \end{gathered}
  \end{displaymath}
  Since $ \pJmp{\sfd,\frf}(u;[0,T])<\infty $ it is not difficult to check that $\sft,\sfu$ are Lipschitz maps
  (if we only know $\pJmp{\sfd,\cF}(u;[0,T])<\infty$, it would not
  clear how to derive a uniform upper bound on the total variation of
  the function $\sfu$).
  We easily extend $\sft$ to $[0,\sfS]$ by setting
  \begin{equation}
    \label{eq:49}
    \sft(s)\equiv t_n\quad\text{if }s\in I_n.
  \end{equation}
  In order to extend $\sfu$, we fix $\eps>0$ and for every interval
  $I_n$ we consider two curves
  $\vartheta_n,\zeta_n:[\sfs(t_{n\,-}),\sfs(t_{n\,+})]\to \XX$
  satisfying
  $\vartheta_n(\sfs(t_{n\,\pm}))=\zeta_n(\sfs(t_{n\,\pm}))=u(t_{n\,\pm})$,
  taking the value $u(t_n)$ at some point in $I_n$, and fulfilling
  \begin{equation}
    \label{eq:50}
    \begin{aligned}
      \int_{I_n}\cF ({t_n},\vartheta_n(s))|\dot\vartheta_n|_\sfd(s)\,\d
      s&\le \tricost{\sfd,\cF }{t_n}{u(t_{n\,-}),u(t_n),u(t_{n\,+})}+\eps
      2^{-n},\\
      \int_{I_n}\frf({t_n},\zeta_n(s))|\dot\zeta_n|_\sfd(s)\,\d
      s&\le \tricost{\sfd,\frf}{t_n}{u(t_{n\,-}),u(t_n),u(t_{n\,+})}+\eps
      2^{-n}.
    \end{aligned}
  \end{equation}
  For $N\in \N$ we define
  \begin{align*}
    \sfu_N(s)&:=
    \begin{cases}
      u(s)&\text{if }s\in [0,\sfS]\setminus I,\\
      \vartheta_n(s)&\text{if }s\in I_n,\ n\le N,\\
      \zeta_n(s)&\text{if }s\in I_n,\ n>N,
    \end{cases}
    \\
    \sF_N(s)&:=
    \begin{cases}
      \sF(\sft(s))&\text{if }s\in [0,\sfS]\setminus I,\\
      \cF(t_n,\vartheta_n(s))&\text{if }s\in I_n,\ n\le N,\\
      \frf(t_n,\zeta_n(s))&\text{if }s\in I_n,\ n>N.
    \end{cases}
  \end{align*}
  It is not difficult to check that $\sfu_N$ is absolutely continuous,
  so that \eqref{eq:4} yields (see \cite[Lemma 4.1]{Rossi-Mielke-Savare08})
  \begin{align*}
    \cE&(0,u(0))+\int_0^T \cP(t,u(t),\sF(t))\,\d t\\&=
    \cE(\sft(0),\sfu_N(0))+\int_0^\sfS
    \cP(\sft(s),\sfu(s),\sF(\sft(s)))\,\dot\sft(s)\,\d s\\&\le
    \cE(\sft(\sfS),\sfu_N(\sfS))+\int_0^\sfS
    \sF_N (s)|\dot\sfu_N|_\sfd(s)\,\d s\\&=
    \cE(T,u(T))+\int_D  \sF (\sft(s))%,u(\sft(s)))
    |\dot\sfu_N|_\sfd(s)\,\d
    s\\&\ +
    \sum_{n=1}^N
    \int_{I_n}\cF (t_n,\vartheta_n(s))|\dot\vartheta_n|_\sfd(s)\,\d s
    +    \sum_{n>N}
    \int_{I_n}\frf(t_n,\zeta_n(s))|\dot\zeta_n|_\sfd(s)\,\d s\\&\le
     \cE(T,u(T))+\int_0^T \sF (t)%,u(t))
     \,\d |u'|+\eps\\&\ +
     \sum_{n=1}^N
     \tricost{\sfd,\cF }{t_n}{u(t_{n\,-}),u(t_n),u(t_{n\,+})}
     +\sum_{n>N} \tricost{\sfd,\frf}{t_n}{u(t_{n\,-}),u(t_n),u(t_{n\,+})}.
  \end{align*}
  Passing first to the limit as $N\up\infty$ (notice that 
  the last term vanishes as $N\up\infty$ since 
  $\pJmp{\sfd,\frf}(u;[0,T])$ is finite) and then as $\eps\down0$
  we obtain \eqref{eq:46}.
\end{proof}

  % In particular, if
  % \begin{equation}
  %   \label{eq:47}
  %   \cF (t,u(t))\le 1\quad \text{for $\comd u\sfd$-a.e.\ $t\in [0,T]$},
  % \end{equation}
  % then
  % \begin{equation}
  %   \label{eq:45}
  %   \cE(0,u(0))+\int_0^t \cP(r,u(r))\,\d r\le
  %   \cE(r,u(r))+\pVar_{\sfd,\cF }(u;[0,t])\quad
  %   \forevery t\in [0,T].
  % \end{equation}
\begin{definition}[Energy-dissipation inequality for $\mathrm{BV}$
  functions]
  \label{def:EDBV}
  Let\\
  $(\XX,\sfd,\cE,\cF ,\cP)$ be an evolution 
  system in the time interval $[0,T]$, 
  let $\psi$ be a metric dissipation function with $\sfL$-linear growth, and
  let $\frf:=\cF \lor \sfL$.
  A curve $u\in \BV {0,T}{(\XX,\sfd)}$ with $\cE(0,u(0))<\infty$ 
  satisfies the $\psi$-$\psi^*$
  energy dissipation inequality if 
  there exists a Borel map
  $\sF\in \rmM_+([0,T])$ %\to [0,\infty]$ 
  satisfying \eqref{eq:29},
  \begin{equation}
    \label{eq:30}
    \sF(t)\ge \cF(t,u(t))\quad\text{in }[0,T],\quad    
    \int_0^T \big[\cP(t,u(t),\sF(t))\big]_+\,\d t<\infty,
  \end{equation}
   the \emph{stability condition on the Cantor part}
  \begin{equation}
    \label{eq:21}
    \sF (t)\le \sfL\quad\text{for $\Cmd u\sfd$-a.a.\ $t\in [0,T]$}
  \end{equation}
  and
  \begin{align}
  %  \begin{aligned}
      \cE(t,u(t))&+\int_{0}^{t} \Big(\psi\big(|\dot
      u|_\sfd(r)\big)+\psi^*\big(\sF (r)%,u(r))
      \big)\Big) \,\,\d r
      +\sfL\int_0^t \d\Cmd u\sfd+\pJmp{\sfd,\frf}(u;[0,t])
      \notag\\&\le
      \cE(0,u(0))+ \int_{0}^{t} \cP(r,u(r),\sF(r))\,\d r
      \quad\forevery t\in [0,T].
  \label{eq:5bv}            \tag{ED}
%    \end{aligned}
  \end{align}
\end{definition}
Since $\psi^*(f)=\infty$ if $f>\sfL$, \eqref{eq:5bv} yields in fact a
stronger version of the local stability condition
\begin{equation}
  \label{eq:118}
  \sF (t)\le \sfL\quad\text{for $(\Leb1+\Cmd u\sfd)$-a.a.\ $t\in [0,T].$}
  \tag{$\mathrm S_{\rm loc}$}
\end{equation}
In the rate-independent case $\psi(v)=\sfL v$, \eqref{eq:21} and
\eqref{eq:5bv} are thus equivalent to 
\eqref{eq:118} and 
 \begin{equation}
   \label{eq:5bvbis}
   \tag{EDRI}
   \begin{aligned}
     \cE(t,u(t))&
     +\sfL\int_{0}^{t} \d\,\comd u\sfd
     +\pJmp{\sfd,\frf}(u;[0,t])
     \\&\le
     \cE(0,u(0))+ \int_{0}^{t} \cP(r,u(r),\sF(r))\,\d r
  \end{aligned}
 \end{equation}
for every $t\in [0,T]$.
\begin{definition}[$\mathrm{BV}$ solutions to evolution systems and
  rate-independent flows]
  \label{def:BVsol}
  Let $(\XX,\sfd,\cE,\cF ,\cP)$ be an upper
  gradient system in the time interval $[0,T]$, 
  let $\psi$ be a metric dissipation function with $\sfL$-linear growth, and
  let $\frf:=\cF \lor \sfL$.\\
  A curve $u\in \BV {0,T}{(\XX,\sfd)}$ with $\cE(0,u(0))<\infty$ is a $\mathrm{BV}$ \emph{solution} of
  the corresponding evolution if there exists %a Borel map
  $\sF\in \rmB_+([0,T])$ %\to [0,\infty]$ 
  satisfying \eqref{eq:30}, 
  the \emph{local stability condition} \eqref{eq:118} and
  the \emph{energy balance}
  \begin{align}
%     \begin{aligned}
    \notag\cE(t_2,u(t_2))&+\int_{t_1}^{t_2} \Big(\psi\big(|\dot
      u|_\sfd\big)+\psi^*\big(\sF %,u(r))
      \big)\Big) \,\,\d r
      +\sfL\int_{t_1}^{t_2} \d\Cmd u\sfd+\pJmp{\sfd,\frf}(u;[t_1,t_2])\\&=
      \cE(t_1,u(t_1))+ \int_{t_1}^{t_2} \cP(r,u(r),\sF(r))\,\d
      r
    \label{eq:22}
       \tag{EB}
      % \qquad\forevery 
%    \end{aligned}
  \end{align}
  for every $[t_1,t_2]\subset [0,T].$
\end{definition}  
Notice that
\eqref{eq:22} holds if and only if 
the curve $t\mapsto e(t):=\cE(t,u(t))$
(extended to $\R$ as in \eqref{eq:140})
is of bounded variation,
$\rmJ_e= \rmJ_u$,
and its distributional time derivative $\frac \d{\dt} e$ satisfies 
  \begin{equation}
    \label{eq:21bis}
    -\frac\d{\d t} e+\cP(\cdot,u,\sF)= \Big(\psi\big(|\dot
    u|_\sfd\big)+\psi^*\big(\sF \big)\Big)\Leb 1+\sfL\Cmd
    u\sfd-\rmJ_e\quad\text{in }\R,
  \end{equation}
  where at each jump point $t\in \rmJ_u$ we have
  $e(t_\pm)=\cE(t,u(t_\pm))$ and the jump part $\rmJ_e$ is
  \begin{displaymath}
    \left\{\begin{aligned}
        \cE(t,u(t_-))-\cE(t,u(t_+))&=-\rmJ_e(\{t\}),\\
        \cE(t,u(t_-))-\cE(t,u(t))&=\bicost \sfd\frf{u(t_-),u(t)},\\
      \cE(t,u(t))-\cE(t,u(t_+))&=\bicost \sfd\frf{u(t),u(t_+)}.
    \end{aligned}
    \right.
  \end{displaymath}
As for gradient flows, thanks to Proposition \ref{prop:BV-chain-rule}
it is immediate to see that whenever \eqref{eq:21} holds
the energy balance \eqref{eq:22} is equivalent to the energy-dissipation inequality
\eqref{eq:5bv} at the final point $t=T$.
Moreover, a BV solution $u$ satisfies
\begin{align}
  \label{eq:141}
  \sF(t)&\in \partial\psi\big(|\dot u|_\sfd(t)\big) &&\text{for
    $\Leb 1$-a.a.\ $t\in (0,T)$},\\
  \sF(t)&=\cF(t,u(t))=\sfL&&\text{for $\Cmd  u\sfd$-a.a.\ $t\in
    (0,T)$},
\end{align}
and the minimal selection principle
\begin{equation}
  \label{eq:138bis}
  |\dot u|_{\sfd}(t) \,\sF(t) 
  % \psi^*\big(\sF(t)\big)
  -\cP(t,u(t),\sF(t))=\min_{f \ge
    \cF(t,u(t))}
  %\Big\{: 
  |\dot u|_{\sfd}(t)\,f 
  %\psi^*\big(f\big)
  -\cP(t,u(t),f)
  %\Big\}
\end{equation}
for $\Leb 1$-a.a.\ $t\in (0,T)$.
In the rate-independent case $\psi(v)=\sfL v$, a BV solution is
equivalently characterized by the local stability \eqref{eq:118} and
the energy balance
 \begin{equation}
   \tag{EBRI}
   \begin{aligned}
     \cE(t_2,u(t_2))&+\sfL\int_{t_1}^{t_2} \d\,\comd u\sfd
     +\pJmp{\sfd,\frf}(u;[t_1,t_2])
     \\&= \cE(t_1,u(t_1))+ \int_{t_1}^{t_2}
     \cP(r,u(r),\sF(r))\,\d r
   \end{aligned}
   \end{equation}
for every $0\le t_1\le t_2\le T$.
\section{Compactness and convergence for families of Gradient Flows}
\label{sec:main}
In this section we will state and prove our main results.
For the sake of clarity, we distinguish between the compactness
(Theorem \ref{thm:compactness}) and
the stability (Theorem \ref{thm:convergence}) issues.

We also take care to highlight the role of the energy-dissipation
inequality for general metric-evolution systems
$(\XX,\sfd_h,\cE_h,\cF_h,\cP_h)$, even if they 
do not satisfy the irreversible upper gradient condition
\ref{def:upper-gradients}. Therefore, compactness and stability of the
energy-dissipation inequality always hold whenever suitable
topological
properties (see the next (C1,2,3,4) assumptions) are satisfied. 

In order to recover a $\psi$-gradient flow
or a BV solution in the limit, we will ask that 
$(\XX,\sfd,\cE,\cF,\cP)$ is an upper gradient system.

We also notice that our theorems can also be extremely useful to prove existence
results for solutions to the limit evolution system: in this case one
could think 
that $(\XX,\sfd_h,\cE_h,\cF_h,\cP_h)$ is a family of suitably
regularized problems
(e.g.\ with smooth superlinear dissipations and better energies) for
which
existence is already known (see \cite{Mielke-Rossi-Savare12}).

Let $(\XX,\top)$ be a topological space, and let $\QQ=[0,T]\times\XX$ with the
standard product topology, which we will denote by $\pi$.
We consider a family of evolution systems $(\XX,\sfd_h,\cE_h,\cF_h,\cP_h)$ in the time interval $[0,T]$ indexed
by the parameter $h\in\N$, a sequence $\bar u_h$ of initial points, and
metric dissipation functions
$\psi_h,\psi$ such that 
\begin{equation}
  \label{eq:53}
  \lim_{h\to\infty}\psi_h(v)=\psi(v)\quad\forevery v\in
  [0,\infty),
\end{equation}
Since each function $\psi_h$ is monotone, \eqref{eq:53} is equivalent to 
\begin{equation}
  \label{eq:60}
    \Gamma\text{-}\lim_{h\to\infty}\psi_h(v)=\psi(v)\quad\forevery v\in [0,\infty),
\end{equation}
and also to the following property, valid for
arbitrary sequences $(w_h)_h\subset [0,\infty)$:
\begin{equation}
  \label{eq:54}
  w_h\to w\quad
  \Rightarrow\quad
  \liminf_{h\to\infty}\psi_h(w_h)\ge \psi(w),\quad
  \liminf_{h\to\infty}\psi_h^*(w_h)\ge \psi^*(w).
  % \begin{cases}
  %   0&\text{if }w\le 1,\\
  %   +\infty&\text{if }w>1.
  % \end{cases}
\end{equation}
Typical examples are given in \eqref{eq:55} and \eqref{eq:87}.
% \begin{equation}
%   \label{eq:55}
%   \begin{aligned}
%     \psi_h(v)&=\frac 1{p_h}\,v^{p_h}\quad&&\text{with }p_h>1, \quad
%     &&\lim_{h\to\infty}p_h=1\\
%     \psi_h(v)&=v+ {\eps_h}\,v^p\quad&&\text{with }p>1,\ \eps_h>0,\quad &&\lim_{h\to\infty}\eps_h=0.
%   \end{aligned}
% \end{equation}
We want to study the limit of absolutely continuous $\psi_h$-gradient
flows $u_h\in \AC{}{0,T}{(\XX,\sfd_h)}$ of the systems
$(\XX,\sfd_h,\cE_h,\cF_h,\cP_h)$ with $u_h(0)=\bar u_h$ as $h\to\infty$,
assuming that they ``converge'' (in a variational sense that we are
going to make precise) to a limit
system $(\XX,\sfd,\cE,\cF ,\cP)$.
The most interesting case is when $\psi$ has $\sfL$-linear growth, so
that we expect a function of bounded variation in the limit.

Here and in the following we identify
diverging subsequences in $\N$ with subsets $H\subset \N$ with $\sup
H=\infty$ and we write $\lim_{\hinH}$
for $\lim_{h\to\infty, h\in H}$.

We will assume that:
\begin{itemize}
\item[(C1)] There exist constants $\sfa<\sfL,\sfb\ge 0$ such that 
  \begin{equation}
    \label{eq:63}
    \tilde\cE_h(t,u):=\cE_h(t,u)+\sfa \sfd_h(u,\bar u_h)+\sfb
    \ge0
    \quad\forevery (t,u)\in \QQ.
  \end{equation}
\item[(C2)] The energies $(\tilde\cE_h)_{h\in \N}$ are equi-coercive in
  $\QQ$:
  for every sequence $(q_h)_{h\in H}\subset \QQ$ with $\sup_{h\in H}
  \tilde\cE_h(q_h)<\infty$ there exists a subsequence $H'\subset H$ such
  that $\lim_{h\in H'}q_h=q$ in the $\pi$-topology.
  % \begin{equation}
  %   \label{eq:23}
  %   \big\{u\in \XX:\cE_t(u)\le c\big\}\quad\text{are compact
  %     in $(\XX,\sigma)$ for every $t\in [0,T]$ and $c\in \R$}
  % \end{equation}
\item[(C3)]
  If two sequences $q^i_h=(t^i_h,x^i_h)\subset \QQ$, $h\in H$, $i=1,2$, satisfy
  $\sup_{h\in H}
  \tilde\cE_h(q^i_h)<\infty$ 
  and $\pi$-converge to $q^i$, 
  then we
  have
  \begin{equation}
    \label{eq:25}
    \liminf_{\hinH}\sfd_h(x^1_h,x^2_h)\ge\sfd(x^1,x^2)
  \end{equation}
  where $\sfd(\cdot,\cdot)$ is a complete extended distance on $\XX$.
\end{itemize}

\begin{theorem}[A priori bounds and compactness]
  \label{thm:compactness}
  Let us suppose that {\upshape (C1)} holds and let $u_h\in
  \AC{}{0,T}{(\XX,\sfd_h)}$,
  $\sF_h\in \rmB_+([0,T])$ be sequences
  satisfying $u_h(0)=\bar u_h$ and the corresponding energy-dissipation inequalities
  \eqref{eq:29} and \eqref{eq:5} for every $h\in \N$, namely
  \begin{equation}
    \label{eq:29bis}
    \sF_h(t)\ge \cF_h(t,u_h(t))\quad\text{in $[0,T]$},\qquad
    \int_0^T \big[\cP_h(t,u_h(t),\sF_h(t))\big]_+\,\d t<\infty,
  \end{equation}
  and
\begin{equation}
  \label{eq:5bis}
  \begin{aligned}
    \cE_h({t},u_h(t))&+\int_{0}^{t} \Big(\psi_h\big(|\dot
    u_h|_{\sfd_h}(r)\big)+\psi^*_h\big(\sF_h(r)%,u_h(r))
    \big)\Big) \,\,\d r
    \\&\le \cE_h({0},\bar u_h)+ \int_{0}^{t}
    \cP_h({r},u_h(r),\sF_h(r))\,\d r
  \end{aligned}
\end{equation} 
for every $t\in [0,T]$.
If there exists a constant $A\ge 0$ such that
  \begin{equation}
    \label{eq:24}
    \cE_h(0,\bar u_h)+
    \int_{0}^{t} \cP_h\big({r},u_h(r),\sF_h(r)\big)\,\d r\le A\quad\forevery h\in
    \N,\ t\in [0,T],
  \end{equation}
  then there exists a constant $C>0$ such that 
  for every $t\in [0,T]$ and $h\in \N$
  \begin{equation}
    \label{eq:67}
    \cE_h(t,u_h(t))\le C,\quad
    \int_0^T \Big(\psi_h\big(|\dot u_h|_{\sfd_h}(r)\big)\,\d
    r+
    \psi^*_h\big(\sF_h(r)%,u(r))
    \big)\Big)
    \,\,\d r\le C,
  \end{equation}
  so that 
  \begin{equation}
    \label{eq:56}
    \lim_{h\to\infty}
    \Leb 1\{t\in (0,T):\sF_h(t)%,u_h(t))
    \ge
    f\}=0\quad\forevery f>\sfL.
  \end{equation}
  If moreover {\rm (C2,3)} hold, then for every subsequence $H\subset \N$ there exists a further subsequence
  $H'\subset H$ 
  such that 
  \begin{equation}
    \lim_{h\in H'}u_h(t)=u(t) \quad
    \text{in
      $(\XX,\sigma)$ for every $t\in [0,T]$,}
    \label{eq:68}
\end{equation}
      with $u\in \BV{0,T}{(\XX,\sfd)}$,
  \begin{equation}
    \label{eq:57}
    \lim_{h\in H'}u_h(t_h)=u(t)\quad
    \text{if $[0,T]\ni t_h\to t\in 
      [0,T]\setminus\rmJ_u$},
  \end{equation}
  and, for every interval $[t_1,t_2]\subset [0,T]$,
  \begin{equation}
    \label{eq:120}
    \liminf_{h\in H'}\int_{t_1}^{t_2}\psi_h\big(|\dot
    u_h|_{\sfd_h}(r)\big)\,\d r\ge 
    \int_{t_1}^{t_2} \psi\big(|\dot
    u|_{\sfd}(r)\big)\,\d r+\sfL \Cmd u\sfd([\alpha,\beta])
  \end{equation}
\end{theorem}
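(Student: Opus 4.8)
The plan is to extract the a~priori bounds \eqref{eq:67}--\eqref{eq:56} directly from the energy--dissipation inequality \eqref{eq:5bis} and (C1), then run a generalized Helly compactness argument using (C2)--(C3), and finally prove the lower--semicontinuity estimate \eqref{eq:120} by a duality/concentration argument, which is the core difficulty. First I would combine \eqref{eq:5bis} with \eqref{eq:24} to obtain $\cE_h(t,u_h(t))+\int_0^t(\psi_h(|\dot u_h|_{\sfd_h})+\psi_h^*(\sF_h))\,\d r\le A$, and use $\sfd_h(u_h(t),\bar u_h)\le\int_0^t|\dot u_h|_{\sfd_h}\,\d r$ together with (C1) to get the lower bound $\cE_h(t,u_h(t))\ge-\sfa\int_0^t|\dot u_h|_{\sfd_h}\,\d r-\sfb$. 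The key quantitative point is a uniform affine lower bound on $\psi_h$: fixing $\sfa<\sfa'<\sfL$, since $\psi$ has $\sfL$-linear growth we have $\psi(V)>\sfa'V$ for $V$ large, so $\psi_h(V)\to\psi(V)$ and convexity with $\psi_h(0)=0$ (hence $v\mapsto\psi_h(v)/v$ nondecreasing) give $\psi_h(v)\ge\sfa'v$ for $v\ge V$ and all large $h$. Splitting the length integral over $\{|\dot u_h|\gtrless V\}$ yields $\sfa\int_0^t|\dot u_h|_{\sfd_h}\le\sfa VT+(\sfa/\sfa')\int_0^t\psi_h(|\dot u_h|_{\sfd_h})$, and absorbing this term produces $(1-\sfa/\sfa')\int_0^t\psi_h(|\dot u_h|_{\sfd_h})+\int_0^t\psi_h^*(\sF_h)\le A+\sfa VT+\sfb$; as $\sfa/\sfa'<1$ this is the dissipation bound in \eqref{eq:67}, the energy bound follows (upper bound from nonnegativity of the dissipation, lower bound from (C1)), and in particular $\int_0^T|\dot u_h|_{\sfd_h}\,\d r\le C$.

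Estimate \eqref{eq:56} is then immediate: for $f>\sfL$ one has $\psi^*(f)=+\infty$, so \eqref{eq:54} applied to the constant sequence forces $\psi_h^*(f)\to+\infty$; since $\psi_h^*$ is nondecreasing, $C\ge\int_0^T\psi_h^*(\sF_h)\,\d r\ge\psi_h^*(f)\,\Leb1\{\sF_h\ge f\}$, whence $\Leb1\{\sF_h\ge f\}\le C/\psi_h^*(f)\to0$.

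For compactness I set $\rmV_h(t):=\Var_{\sfd_h}(u_h;[0,t])=\int_0^t|\dot u_h|_{\sfd_h}\,\d r$, uniformly bounded by $C$, and apply Helly's selection theorem to get a subsequence with $\rmV_h\to V$ pointwise on $[0,T]$, $V$ nondecreasing. Since $\cE_h(t,u_h(t))\le C$ and $\sfd_h(u_h(t),\bar u_h)\le C$, we have $\sup_h\tilde\cE_h(t,u_h(t))<\infty$ for each fixed $t$, so (C2) makes $\{u_h(t)\}_h$ relatively $\sigma$-compact. A diagonal extraction over a countable set $D$ containing a dense subset and the (countable) jump set of $V$ gives $u_h(t)\to u(t)$ for $t\in D$; for general $t$, any two $\sigma$-limit points $y,y'$ of $u_h(t)$ satisfy, via $\sfd_h(u_h(s),u_h(t))\le\rmV_h(t)-\rmV_h(s)$ and (C3), the inequalities $\sfd(u(s),y),\,\sfd(u(s),y')\le V(t)-V(s)\to0$ as $s\uparrow t$ through $D$ at continuity points of $V$, forcing $y=y'=:u(t)$ and proving \eqref{eq:68}. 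Testing \eqref{eq:25} on partitions gives $\Var_\sfd(u;[0,T])\le V(T)\le C$, so $u\in\BV{0,T}{(\XX,\sfd)}$. Property \eqref{eq:57} follows by squeezing $u_h(t_h)$ between its values at $D$-points $s<t<s'$ in the same way; the delicate point here — besides \eqref{eq:120} — is to exclude concentration of $|\dot u_h|_{\sfd_h}$ at a continuity point of $u$, so that the squeeze closes ($V(s')-V(s)\to0$ as $s\uparrow t\downarrow s'$ for $t\notin\rmJ_u$); this is where the inequality \eqref{eq:5bis} is essential, since comparison with the initial energy penalizes any out-and-back excursion and thus prevents a diffuse variation from concentrating without a genuine jump of $u$.

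The main difficulty is \eqref{eq:120}. I would pass, on a further subsequence, to the weak$^*$ limit $\nu$ of the bounded measures $\nu_h:=|\dot u_h|_{\sfd_h}\,\Leb1\res[t_1,t_2]$, and first use (C3) on fine partitions to show $\nu\res([t_1,t_2]\setminus\rmJ_u)\ge\comd u\sfd\res([t_1,t_2]\setminus\rmJ_u)$; writing $\nu=\rho\,\Leb1+\nu^s$ and comparing Lebesgue decompositions this gives $\rho\ge|\dot u|_\sfd$ a.e.\ and $\nu^s\ge\Cmd u\sfd$. The heart of the matter is lower semicontinuity of the dissipation with $h$-dependent convex integrands. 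For $\phi$ continuous, compactly supported in $(t_1,t_2)$ with $0\le\phi\le1$, and any $f\in[0,\sfL)$, the Fenchel inequality $\psi_h(v)\ge fv-\psi_h^*(f)$ yields $\int\psi_h(|\dot u_h|_{\sfd_h})\ge f\int\phi\,\d\nu_h-\psi_h^*(f)\int\phi\,\d r$; passing to the limit and using $\psi_h^*(f)\to\psi^*(f)$ for $f\in(0,\sfL)$ (convergence of conjugates of pointwise--convergent convex functions on the interior of $D(\psi^*)=[0,\sfL)$) gives $\liminf_h\int\psi_h(|\dot u_h|_{\sfd_h})\ge f\int\phi\,\d\nu-\psi^*(f)\int\phi\,\d r$. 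Optimizing over $f$ and $\phi$ — Fenchel duality pointwise on the absolutely continuous part, and $f\uparrow\sfL$ on the singular part, where $\psi^*(f)$ stays finite while $\int\phi\,\d r$ is made negligible and $\int\phi\,\d\nu\to\nu^s$ — produces the Reshetnyak--type bound $\liminf_h\int_{t_1}^{t_2}\psi_h(|\dot u_h|_{\sfd_h})\,\d r\ge\int_{t_1}^{t_2}\psi(\rho)\,\d r+\sfL\,\nu^s([t_1,t_2])$, and then $\rho\ge|\dot u|_\sfd$, $\nu^s\ge\Cmd u\sfd$ and the monotonicity of $\psi$ give \eqref{eq:120}. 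I expect this last step to be the principal obstacle: correctly capturing, through the interplay of weak$^*$ convergence of the metric--derivative measures and convex duality with $h$-dependent potentials, both the absolutely continuous contribution $\int\psi(|\dot u|_\sfd)$ and the recession contribution $\sfL\,\Cmd u\sfd$ arising from the concentrating part of the variation.
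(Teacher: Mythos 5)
Your strategy coincides with the paper's on all three fronts: the a~priori bounds \eqref{eq:67}--\eqref{eq:56} come from a uniform affine minorant of $\psi_h$ (you derive it from convexity and pointwise convergence at one large value, the paper from $\psi_h(v)\ge \bar\sfa v-\sup_h\psi_h^*(\bar\sfa)$ --- equivalent), absorbed into the $\sfa\,\sfd_h(u_h(t),\bar u_h)$ term of (C1); the compactness is a generalized Helly argument (you run Helly on the variation functions $\rmV_h$, the paper on the dissipation measures $\nu_{h,f}=(\psi_h(|\dot u_h|_{\sfd_h})+\psi_h^*(f))\Leb 1$, which package the same information); and \eqref{eq:120} is obtained by Fenchel duality $\psi_h(v)\ge fv-\psi_h^*(f)$ with $f\uparrow\sfL$ on the singular part, your test-function/Reshetnyak localization replacing the paper's interval localization plus Lebesgue differentiation. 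All of this is sound, including the convergence $\psi_h^*(f)\to\psi^*(f)$ on $[0,\sfL)$ that both arguments need.

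The one step that does not hold up is your justification of \eqref{eq:57}. Your squeeze requires the limit variation $V$ to be continuous at every $t\notin\rmJ_u$, i.e.\ that $|\dot u_h|_{\sfd_h}\,\Leb 1$ cannot concentrate mass at a continuity point of $u$, and you assert this follows because \eqref{eq:5bis} ``penalizes out-and-back excursions.'' It does not: \eqref{eq:5bis} is an inequality anchored at $t=0$, so it cannot be subtracted at two times to control the dissipation on a subinterval $[s,t]$ by $\cE_h(s,u_h(s))-\cE_h(t,u_h(t))+\int_s^t\cP_h$; the slack accumulated before time $s$ is lost, and the energies $\cE_h(s,u_h(s))$ are not upper semicontinuous along the convergence, so a concentrated excursion paid for by a concentrated energy drop is not excluded by the a~priori bounds alone. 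The paper's own proof avoids this issue rather than resolving it: its contradiction argument establishes $u_h(t_h)\to u(t)$ only for $t$ outside the atom set $J=\{t:\nu_0(\{t\})>0\}$ of the limit dissipation measure (which contains, but may be strictly larger than, $\rmJ_u$), using $f\,\sfd(u(t),\tilde u)\le\nu_f(\{t\})=0$ there. You should either prove \eqref{eq:57} in that weaker form (off the atoms of $\nu_0$, which is all that is used later) or supply an actual argument excluding concentration at points of $J\setminus\rmJ_u$; the heuristic as written is not one.
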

\begin{proof}
  \underline{Let us first prove \eqref{eq:67} and \eqref{eq:56}.}\\
    First of all we show that $\sfd_h(u_h(t),\bar u_h)$ is uniformly
  bounded, 
  we choose $\bar \sfa \in (\sfa, \sfL)$, and we observe that
  \eqref{eq:54}, the monotonicity of 
  $\psi_h$, and the continuity of $\psi^*$ in $[0,\sfL)$ yield $\lim_{h\to\infty}\psi_h^*(\bar \sfa)=\psi^*(\bar\sfa)<\infty$ so that 
  $\sfc:=\sup_h\psi^*_h(\bar \sfa)<\infty$. It follows that 
  \begin{displaymath}
    \psi_h(v)\ge \bar \sfa\, v-\sfc\quad\forevery v\ge0,\ h\in \N,
  \end{displaymath}
  and therefore
  \eqref{eq:5bis} and \eqref{eq:63} yield
  \begin{align*}
    \bar \sfa\,\sfd_h(u_h(t),\bar u_h)&\le 
    \bar \sfa\,\int_0^t |\dot u_h|_{\sfd_h}(r)\,\d r\le 
    \int_0^t \psi_h\big(|\dot u_h|_{\sfd_h}(r)\big)\,\d r+\sfc\\&\le
    A-\cE_h(t,u_h(t))+\sfc\le 
    A+\sfb+\sfa\sfd_h(u_h(t),\bar u_h)+\sfc,
  \end{align*}
  so that 
  \begin{equation}
    \label{eq:66}
    \sfd_h(u_h(t),\bar u_h)\le (\bar
    \sfa-\sfa)^{-1}\big(A+\sfb+\sfc\big)\quad
    \forevery t\in [0,T],\ h\in \N.
  \end{equation}
  Combining (C1), \eqref{eq:24} and \eqref{eq:66} we conclude that there exists a constant $B\ge 0$ such that
  \begin{equation}
    \label{eq:64}
    -B\le \cE_h({t},u_h(t))\le A\quad \forevery  t\in [0,T],\ h\in \N.
  \end{equation}
  % A similar argument shows that (here $\displaystyle\media_{t_1}^{t_2}=(t_2-t_1)^{-1}\int_{t_1}^{t_2}$)
  % \begin{equation}
  %   \label{eq:65}
  %   \psi\Big(\limsup_{h\to\infty}\media_{t_1}^{t_2} |\dot u_h|_{\sfd_h}(t)\,\d t\Big)
  %   \le 
  %   \limsup_{h\to\infty} \media_{t_0}^{t_1} \psi_h\big( |\dot
  %   u_h|_{\sfd_h}(t)\big)\,\d t\quad
  %   \forevery 0\le t_0< t_1\le T.
  % \end{equation}
  % It is sufficient to notice that for every $0<f<\sfL$
  % \begin{align*}
  %   f\, \limsup_{h\to\infty}\media_{t_1}^{t_2} |\dot u_h|_{\sfd_h}(t)\,\d t&
  %   \le 
  %   \limsup_{h\to\infty}\media_{t_1}^{t_2} \Big(\psi_h\big(|\dot u_h|_{\sfd_h}(t)\big)+\psi_h^*(f)\Big)\,\d t\\&=
  %   \limsup_{h\to\infty} \media_{t_1}^{t_2}\psi_h\big( |\dot
  %   u_h|_{\sfd_h}(t)\big)\,\d t+\psi^*(f)
  % \end{align*}
  % by \eqref{eq:54} and to apply the duality formula
  % $\psi(v)=\sup_{0<f<\sfL} fv-\psi^*(f)$.
%  \eqref{eq:65}, 
  Therefore \eqref{eq:64} and \eqref{eq:5bis} yield
  \begin{equation}
    \label{eq:35}
    \int_0^T \psi_h\big(|\dot u_h|_{\sfd_h}(t)\big)\,\d
    t\le A+B,\quad
    \int_0^T \psi_h^*\big(\sF_h(t)%,u_h(t))
    \big)\,\d t\le A+B.
  \end{equation}
  We eventually obtain \eqref{eq:56}, since by the
  monotonicity of $\psi_h^*$ we get from the second of \eqref{eq:35}
  \begin{displaymath}
    \psi^*_h(f) \, \Leb 1\{t\in (0,T):\cF_h(t,u_h(t))\ge
    f\}\le A+B\quad\forevery f\ge 0,
  \end{displaymath}
  and $\lim_{h\to\infty}\psi^*_h(f)=\infty$ when $f>\sfL$ by
  \eqref{eq:54} and \eqref{eq:116}.

  \underline{The proof of \eqref{eq:68} and \eqref{eq:57}} can be
  easily obtained by adapting the 
  argument of the extended Ascoli-Arzel\`a-Helly type result 
  \cite[Proposition 3.3.1]{Ambrosio-Gigli-Savare08}. 

  By (C2) and the bound \eqref{eq:67}, for every $t\in [0,T]$ the
  sequence $(u_h(t))_{h\in H}$
  admits a $\sigma$-converging subsequence (possibly depending on
  $t$). 
  For every $f\in [0,\sfL)$ and $0\le t_0<t_1\le T$ we recall the bound
  \begin{equation}
    \label{eq:115}
    f\sfd(u_h(t_1),u_h(t_0))\le \int_{t_0}^{t_1} f|\dot
    u_h|_{\sfd_h}\,\d t\le  \int_{t_0}^{t_1}\Big(\psi_h\big(|\dot
    u_h|_{\sfd_h}\big)+\psi_h^*(f)\Big)\,\d t.
  \end{equation}
  We consider the nonnegative finite measures on $[0,T]$
  \begin{equation}
    \label{eq:72}
    \nu_{h,f}:=\Big(\psi_h\big(|\dot
    u_h|_{\sfd_h}\big)+\psi_h^*(f)\Big)\Leb 1,\qquad
    f\in [0,\sfL)
  \end{equation}
   on 
   $[0,T]$; up to extracting a suitable subsequence, we can suppose
   that they weakly$^*$ converge to a finite measure
   $\nu_f=\nu_0+\psi^*(f)\Leb 1$ in the
   duality with continuous functions on $[0,T]$, so that
  \begin{equation}
    \label{eq:36}
    f\limsup_{h\to\infty}\sfd_h(u_h(t_1),u_h(t_2))\le
    \limsup_{h\to\infty}\nu_{h,f}([t_1,t_2])\le \nu_f([t_1,t_2])
  \end{equation}
  for every $0\le
    t_1\le t_2\le T$.
  Denoting by $J:=\{t\in
  [0,T]:\nu_0(\{t\})>0\}$
  and considering a countable set $I\supset J$ dense in $[0,T]$, by a
  standard diagonal argument we can find a subsequence $H'\subset H$
  such that $u_h(t)\topto u(t)$ for every $t\in I$ as $h\to\infty$,
  $h\in H'$. By (C3) we have
  \begin{equation}
    \label{eq:37}
    f\,\sfd(u(t_1),u(t_2))\le \nu_f([t_1,t_2])\quad\forevery t_1,t_2\in I.
  \end{equation}
  Since $(\XX,\sfd)$ is complete, the curve $I\ni t\mapsto u(t)$ can be uniquely extended to a
  continuous curve in $[0,T]\setminus J$, which we still denote by
  $u$. 
  In order to prove \eqref{eq:57} we argue by contradiction and we
  find a sequence $H''\subset H'$, points $t_h\to t\in [0,T]\setminus J$
  and a $\sigma$-neighborhood $U$ of $u(t)$ 
  such that $(u_h(t_h))\not\in U$ for every $h\in H''$. 
  Up to extracting a further subsequence (still denoted by $H''$) we
  can assume that $u_h(t_h)\topto \tilde u\neq u(t)$ so that by (C3)
  \begin{displaymath}
    f\sfd(u(t),\tilde u)\le \liminf_{h\in H''}
    f\sfd_h(u_h(t),u_h(t_h))\le \limsup_{h\in H''}\nu_{h,f}([t,t_h])=\nu_f(\{t\})=0.
  \end{displaymath}
  This yields in particular that $u_h(t)$ converges pointwise to
  $u(t)$ as $h\to\infty$, $h\in H'$; \eqref{eq:37} then holds for every $t_1,t_2\in [0,T]$ and
  shows that 
  $u\in \BV{0,T}{(\XX,\sfd)}$.

  %Recalling \eqref{eq:65}, 
  Since for an arbitrary subdivision
  $t_0=\alpha<t_1<\cdots<t_{n-1}<t_n=\beta$ there holds
  \begin{equation}
    \label{eq:80}
    \sum_{i=1}^n\sfd_h(u_h(t_i),u_h(t_{i-1}))\le \int_\alpha^\beta|\dot
    u_h|_{\sfd_h}(r)\,\d r
  \end{equation}
  it is easy to check that 
  \begin{displaymath}
    f \sum_{i=1}^n\sfd(u(t_i),u(t_{i-1}))\le \limsup_{h\in H''}\int_\alpha^\beta|\dot
    u_h|_{\sfd_h}(r)\,\d r\le \nu_0([\alpha,\beta])+\psi^*(f)(\beta-\alpha),
  \end{displaymath}
  so that 
\begin{equation}
  \label{eq:81}
  f \Var_\sfd(u;[\alpha,\beta])\le \nu_0([\alpha,\beta]) +\psi^*(f)(\beta-\alpha).
\end{equation}
Therefore, the duality formula
$\psi(v)=\sup_{0<f<\sfL} \big(fv-\psi^*(f)\big)$ yields
\begin{equation}
  \label{eq:119}
  \psi\Big((\beta-\alpha)^{-1}\Var_\sfd(u;[\alpha,\beta])\Big)\le  (\beta-\alpha)^{-1}\nu_0([\alpha,\beta]) .
\end{equation}
From \eqref{eq:81} we immediately get
\begin{equation}
  \label{eq:121}
  f\big(\Cmd u\sfd+\jmd u\sfd\big)\le \nu_0
  +\psi^*(f)\Leb 1\quad \forevery f<\sfL.
\end{equation}
Since $\Cmd u\sfd$ and $\jmd u\sfd$ are concentrated in a $\Leb
1$-negligible set, we deduce 
\begin{equation}\label{eq:142}
  \sfL\big(\Cmd u\sfd+\jmd u\sfd\big)\le \nu_0.
\end{equation}
When $\psi$ is superlinear we conclude that $u$ is absolutely
continuous, since in this case $\sfL=\infty$ and \eqref{eq:142} yields
$\Cmd u=0,\jmd u=0$.  

\underline{Let us eventually prove \eqref{eq:120}.}\\
\eqref{eq:119} and the monotonicity of $\psi$ yield, for $\alpha=t$
and $\beta=t+\eps$
\begin{equation}
  \label{eq:122}
  \psi\Big(\frac 1\eps\int_t^{t+\eps} |\dot u|_\sfd(r)\,\d r\Big)\le \eps^{-1}\,\nu_0([t,t+\eps]).
\end{equation}
Integrating this inequality from $t_0$ to $t_1-\eps$ with respect to $t$
we obtain
\begin{align*}
  \int_{t_0}^{t_1-\eps}&\psi\Big(\eps^{-1}\int_t^{t+\eps} |\dot
  u|_\sfd(r)\,\d r\Big)\,\d t\le
  \eps^{-1}\int_{t_0}^{t_1-\eps}\nu_0([t,t+\eps])\,\d t\\&\le 
  \eps^{-1}(\Leb 1\times\nu_0)(\{(t,s)\in [t_0,t_1]^2:t\le s\le t+\eps\})
  \\&\le \eps^{-1}\int_{t_0}^{t_1} \Leb 1([s-\eps,s])\,\d\nu_0(s)=\nu_0([t_0,t_1])
\end{align*}
so that, passing to the limit  as $\eps\down0$ in the above
inequality and applying Fatou's
Lemma and Lebesgue's differentiation Theorem we get
\begin{equation}
  \label{eq:124}
  \int_{t_0}^{t_1}\psi\big( |\dot
  u|_\sfd(r)\big)\,\d r\,\d t\le \nu_0([t_0,t_1]).
\end{equation}
Since $t_0$ and $t_1$ are arbitrary, we conclude that $\nu_0\ge \psi\big( |\dot
  u|_\sfd\big)\Leb 1$. Since $\Leb 1$ is singular with respect to
  $\Cmd u\sfd$ and $\jmd u\sfd$ we eventually get
  \begin{equation}
    \label{eq:125}
    \nu_0\ge \psi\big( |\dot
  u|_\sfd\big)\Leb 1+\sfL\big(\Cmd u\sfd+\jmd u\sfd\big),
  \end{equation}
  which in particular yields \eqref{eq:120}, since 
  \begin{displaymath}
     \liminf_{h\in H'}\int_{t_1}^{t_2}\psi_h\big(|\dot
    u_h|_{\sfd_h}(r)\big)\,\d r=\liminf_{h\in
      H''}\nu_{h,0}\big((t_1,t_2)\big)\ge \nu_0\big((t_1,t_2)\big)
  \end{displaymath}
  and $\Leb 1$ and $\Cmd u\sfd$ are diffuse.
\end{proof}
We want to study now the properties of the limit function $u$;
we will suppose that the following lower semicontinuity properties hold:
\begin{enumerate}[(C4)]
\item If a sequence $(u_h)_{h\in H}\subset \XX$ 
  $\sigma$-converges to $u$ with $\sup_{h\in
    H}\tilde\cE(t,u_h)<\infty$ for some $t\in [0,T]$ then
  \begin{equation}
    \label{eq:61}
    \liminf_{h\in H}\cE_h(t,u_h)\ge \cE(t,u),
    \tag{C4$_E$}
  \end{equation}
  \begin{equation}
    \label{eq:26a}
    f_h\ge \cF_h(t,u_h),\
    f_h\to f\quad\Rightarrow\quad
    \left\{
      \begin{aligned}
        &f\ge \cF(t,u), \\&
        \limsup_{h\in H}\cP_h(t,u_h,f_h)\le
        \cP(t,u,f),
      \end{aligned}
      \right.
    \tag{C4$_{FP}$}
  \end{equation}
%  
% \begin{equation}
%   \label{eq:62}
%   \text{if }\sup_{h\in H}\cE(t,u_h)<\infty\text{ and }\lim_{h\in H}f_h=f
%     \quad\text{then}\quad
%     \tag{C4$_P$}
%   \end{equation}
  \begin{equation}
    \label{eq:26}
    %\text{if }
    \lim_{h\in H}t_h=t,\ \sup_{h\in
      H}\cE(t_h,u_h)<\infty\quad\Rightarrow
    %\text{then}
    \quad \liminf_{h\in H}
    \cF_h(t_h,u_h)\ge \cF(t,u).
    \tag{C4$_F$}
  \end{equation}
  \end{enumerate}
Notice that in \eqref{eq:26} we allow for an $h$-dependence of $t$ in
the $\Gamma$-$\liminf$ inequality for $\cF$, whereas $t$ is
independent of $h$ in \eqref{eq:26a}. \eqref{eq:26} will not be
required for
the convergence result in the superlinear case, see Theorem
\ref{thm:convergence2}.

The next statement is the main result of our paper: it states that
the energy-dissipation inequality is preserved in the
limit
for arbitrary evolution systems fulfilling (C1,2,3,4). When the limit
$(\XX,\sfd,\cE,\cF,\cP)$ is also an upper gradient evolution system,
then we recover a BV solution. 
\begin{theorem}[Stability of the energy-dissipation inequality and convergence]
  \label{thm:convergence}
  Let us assume that $\psi$ has $\sfL$-linear growth and for $h\in\N$
  let 
  $(\XX,\sfd_h,\cE_h,\cF_h,\cP_h)$ be a family of evolution
  systems satisfying {\upshape (C1,2,3,4)} with respect to a sequence
  $\bar u_h\in \XX$ $\sigma$-converging to $\bar u$.
  Let  $u_h\in \AC{}{0,T}{(\XX,\sfd_h)}$, $\sF_h\in \rmB_+([0,T])$, $h\in H$, be sequences
  satisfying the $\psi_h$-energy dissipation inequality
  \eqref{eq:5bis}, such that $u_h$ is
  pointwise converging to $u$ in $(\XX,\sigma)$, $u_h(0)=\bar
  u_h$, and 
  \begin{equation}
    \label{eq:24strong}
    \begin{aligned}
      &\lim_{h\in H} \cE_h(0,\bar u_h)=\cE(0,\bar u),
      \\&
      r\mapsto
      \big[\cP_h(r,u_h(r),\sF_h(r))\big]_+ \text{ are equi-integrable
        in $(0,T)$}.
    \end{aligned}
  \end{equation}
  %\begin{description}
  % \item[$(\sfL=\infty):$] If $\psi$ is superlinear, then $u\in \AC{}{0,T}{(\XX,\sfd)}$
  %   and satisfies the energy-dissipation inequality \eqref{eq:5}.
  % \item[$(\sfL<\infty):$] If $\psi$ has $\sfL$-linear growth, 
  Then $u\in \BV{0,T}{(\XX,\sfd)}$ satisfies 
  the local stability
  condition 
  \begin{equation}
    \label{eq:70}
    \cF(t,u(t))\le \sfL\quad\text{for every $t\in [0,T]\setminus\rmJ_u$}.
  \end{equation}
  If 
  \begin{equation}
    \label{eq:31}
    \int_0^T \cP(t,u(t),\sfL)\,\d t<\infty
  \end{equation}
  then
  $u$ satisfies the {\rm BV} energy-dissipation 
  inequality in the formulation of %\eqref{eq:30}, 
  \eqref{eq:5bv}, \eqref{eq:118}
  for some 
  $\sF\in \rmB_+([0,T])$.
  
  In particular, if
  $(\XX,\sfd,\cE,\cF,\cP)$ is an upper gradient evolution system, then
  $u$
  satisfies \eqref{eq:31} and therefore
  is a $\mathrm{BV}$-solution of the corresponding
  rate-independent evolution and we have
  \begin{gather}
    \label{eq:69}
    \lim_{h\in H}\cE_h(t,u_h(t))=\cE(t,u(t))\quad\forevery t\in [0,T].
    % \label{eq:71}
    % \begin{aligned}
    %   \lim_{h\in H}&\int_{0}^{T} \Big(\psi_h\big(|\dot
    %   u_h|_{\sfd_h}(r)\big)+\psi^*_h\big(\cF_h(r,u_h(r))\big)\Big)
    %   \,\,\d r\\&=\int_{0}^{T} \Big(\psi\big(|\dot
    %   u|_\sfd(r)\big)+\psi^*\big(\cF (r,u(r))\big)\Big) \,\,\d r
    %   +\sfL\int_0^T \d\Cmd u\sfd+\pJmp{\sfd,\frf}(u;[0,T]).
    % \end{aligned}
  \end{gather}
%\end{description}
\end{theorem}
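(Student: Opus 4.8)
The plan is to pass to the limit in the $\psi_h$-energy-dissipation inequality \eqref{eq:5bis}, reading its left-hand side as a sequence of finite measures on $[0,T]$ and exploiting the compactness of Theorem~\ref{thm:compactness}. I first establish the local stability \eqref{eq:70}. Fix $t\notin\rmJ_u$ and $f>\sfL$. By \eqref{eq:56} the sets $\{\sF_h\ge f\}$ have Lebesgue measure tending to $0$, so one may choose $t_h\to t$ with $\cF_h(t_h,u_h(t_h))\le\sF_h(t_h)\le f$; since $t\notin\rmJ_u$, \eqref{eq:57} gives $u_h(t_h)\to u(t)$, and the time-dependent $\Gamma$-$\liminf$ inequality \eqref{eq:26} yields $\cF(t,u(t))\le f$. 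Letting $f\downarrow\sfL$ proves \eqref{eq:70}.

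Next I introduce the dissipation measures $\mu_h:=\big(\psi_h(|\dot u_h|_{\sfd_h})+\psi_h^*(\sF_h)\big)\Leb1$ on $[0,T]$, which have equibounded mass by \eqref{eq:67}, and pass to a weak$^*$ limit $\mu$. On the diffuse part the velocity contribution is controlled by \eqref{eq:120}, which already yields $\psi(|\dot u|_\sfd)\Leb1+\sfL\Cmd u\sfd$. For the slope term I extract a weak-$L^1$ limit $\sF$ of the truncations $\sF_h\land\sfL$ (bounded by $\sfL$, so $\sF\le\sfL$, which gives the Cantor stability \eqref{eq:21}); the convexity of $\psi^*$ together with the $\liminf$ property \eqref{eq:54} gives $\liminf_h\int\psi_h^*(\sF_h)\ge\int\psi^*(\sF)\,\d r$ on the absolutely continuous part, while $\sF_h\ge\cF_h$, \eqref{eq:26} and Fatou's lemma give $\sF\ge\cF(\cdot,u)$ a.e.

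The crux is the jump part, where the mass of $\mu_h$ concentrates. For every $f<\sfL$ I use the elementary Fenchel bound
\[
\psi_h(|\dot u_h|_{\sfd_h})+\psi_h^*(\sF_h)\ \ge\ \big(\cF_h(\cdot,u_h)\lor f\big)\,|\dot u_h|_{\sfd_h}-\psi_h^*(f),
\]
valid on $\{\sF_h\ge f\}$ via $\sF_h\ge\cF_h\lor f$ and on $\{\sF_h<f\}$ via $\psi_h(v)\ge fv-\psi_h^*(f)$. Since $\psi_h^*(f)\to\psi^*(f)<\infty$, the correction is negligible on shrinking intervals around a jump $t_*$, whereas the reparametrization-invariant weighted length $\int(\cF_h(\cdot,u_h)\lor f)\,|\dot u_h|_{\sfd_h}$ survives. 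Blowing up time around $t_*$ and reparametrizing $u_h$ by $\sfd_h$-arclength on a fixed interval, I extract a limiting transition path $\vartheta$ joining $u(t_{*-})$ to $u(t_{*+})$ through $u(t_*)$ (using the pointwise convergences \eqref{eq:68}, \eqref{eq:57}); lower semicontinuity of the weighted length, with $\liminf_h\cF_h\ge\cF$ supplied by \eqref{eq:26}, then gives, after $f\uparrow\sfL$,
\[
\mu(\{t_*\})\ \ge\ \finsl{\sfd,\frf}{t_*}\vartheta\ \ge\ \tricost{\sfd,\frf}{t_*}{u(t_{*-}),u(t_*),u(t_{*+})},\qquad \frf:=\cF\lor\sfL.
\]
Summing these mutually singular contributions over the countably many jumps (and the analogous endpoint transitions) and adding the diffuse bound shows that $\mu$ dominates the whole left-hand dissipation $(\psi(|\dot u|_\sfd)+\psi^*(\sF))\Leb1+\sfL\Cmd u\sfd+\pJmp{\sfd,\frf}(u;\cdot)$ of \eqref{eq:5bv}. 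This jump analysis---extracting the variational transition cost from the concentrating mass by blow-up and reparametrization while respecting the singular Cantor and jump decomposition of $\mmd u\sfd$---is the main obstacle of the proof.

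For the right-hand side I combine $\cE_h(0,\bar u_h)\to\cE(0,\bar u)$ from \eqref{eq:24strong} with a power estimate: splitting off $\{\sF_h\ge f\}$ (of vanishing measure by \eqref{eq:56}, harmless by the equi-integrability in \eqref{eq:24strong}) and using the monotonicity of $\cP$ and \eqref{eq:26a} on the complement, I pass to the limit and, letting $f\downarrow\sfL$ and invoking the upper semicontinuity of $\cP(q,\cdot)$, obtain $\limsup_h\int_0^t\cP_h(r,u_h,\sF_h)\,\d r\le\int_0^t\cP(r,u,\sF)\,\d r$ for the selection $\sF$ constructed above. Inserting the lower bound on $\mu$, the energy lower semicontinuity \eqref{eq:61}, and this power bound into \eqref{eq:5bis}, evaluated at a continuity point $t$ of $\mu$ and of $r\mapsto\cE(r,u(r))$ and then extended to all $t$ by the usual monotonicity argument, yields the $\mathrm{BV}$ energy-dissipation inequality \eqref{eq:5bv}, \eqref{eq:118}.

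Finally, if $(\XX,\sfd,\cE,\cF,\cP)$ is an upper gradient system, I apply Proposition~\ref{prop:BV-chain-rule} with the Borel map $\frf(\cdot,u)=\cF(\cdot,u)\lor\sfL$, which equals $\sfL$ off the countable set $\rmJ_u$ by \eqref{eq:70}: its conclusion \eqref{eq:46} bounds $\cE(0,u(0))+\int_0^T\cP(r,u,\sfL)\,\d r$ by finite quantities, whence \eqref{eq:31}. Then \eqref{eq:5bv} and \eqref{eq:46} are mutually reverse, so they must hold with equality; this is the energy balance, and additivity over subintervals makes $u$ a $\mathrm{BV}$ solution. The convergence \eqref{eq:69} follows: the bound $\liminf_h\cE_h(t,u_h(t))\ge\cE(t,u(t))$ is \eqref{eq:61}, while the matching $\limsup$ comes from rearranging \eqref{eq:5bis} and using the already established lower bound on the dissipation together with the attained energy balance.
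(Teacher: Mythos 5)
Your overall architecture coincides with the paper's: the same diagonal argument (via \eqref{eq:56}, \eqref{eq:57} and \eqref{eq:26}) for the local stability \eqref{eq:70}, weak$^*$ limits of dissipation measures with the diffuse lower bound \eqref{eq:120}, a blow-up and reparametrization at each atom combined with a lower semicontinuity lemma for $\int F_h m_h$ (with $\liminf F_h\ge F$ pointwise and $m_h\rightharpoonup m$) to recover the transition cost $\tricost{\sfd,\frf}{t}{u(t_-),u(t),u(t_+)}$, and Proposition~\ref{prop:BV-chain-rule} to upgrade the inequality to the energy balance and to \eqref{eq:69}.

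There is, however, one genuine gap: the construction of the single Borel selection $\sF$ that must appear \emph{simultaneously} in $\psi^*(\sF)$ on the left of \eqref{eq:5bv} and in $\cP(\cdot,u,\sF)$ on the right. You decouple the two terms, define $\sF$ as a weak-$L^1$ limit of $\sF_h\land\sfL$, and claim $\limsup_h\int_0^t\cP_h(r,u_h,\sF_h)\,\d r\le\int_0^t\cP(r,u,\sF)\,\d r$. Your own argument (splitting off $\{\sF_h\ge f\}$, using monotonicity and \eqref{eq:26a} with the constant $f$, then $f\downarrow\sfL$ and upper semicontinuity of $\cP(q,\cdot)$) only yields the weaker bound $\int_0^t\cP(r,u,\sfL)\,\d r$; since $f\mapsto\cP(q,f)$ is merely nondecreasing and upper semicontinuous, not concave, there is no way to push this upper bound down to the weak limit $\sF\le\sfL$ (if $\sF_h$ oscillates between two admissible values, $\limsup_h\int\cP_h(\cdot,u_h,\sF_h)$ can strictly exceed $\int\cP(\cdot,u,\sF)$ at the averaged limit). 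Conversely, taking $\sF\equiv\sfL$ on the right destroys the left-hand side, because $\psi^*(\sfL)$ may be $+\infty$ (e.g.\ $\psi(v)=v-\log(1+v)$) and in any case $\psi^*(\sfL)\ge\psi^*(\sF)$. The paper never separates the two terms: it sets $a_h:=\psi_h^*(\sF_h)-\cP_h(\cdot,u_h,\sF_h)$, defines $\sF(t)$ as a Castaing--Valadier measurable selection of the minimizers of $f\mapsto\psi^*(f)-\cP(t,u(t),f)$ over $[\cF(t,u(t)),\sfL]$, and gets the pointwise inequality $\liminf_h a_h(t)\ge\psi^*(\sF(t))-\cP(t,u(t),\sF(t))$ directly from \eqref{eq:26a} and the minimality of $\sF(t)$, after which Fatou applies to the combined integrand. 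Your route works only when $\cP$ is independent of $f$; in the stated generality (which covers the marginal-functional example) it does not produce an admissible $\sF$. A minor further point: the bound $\liminf_h\int\psi_h^*(\sF_h)\,\d r\ge\int\psi^*(\sF)\,\d r$ for a weak-$L^1$ limit requires a convex-duality (Ioffe-type) argument, since \eqref{eq:54} only covers pointwise convergence; this is repairable but should be stated.
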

\begin{proof}
We denote by
$\cT$ the set $[0,T]\setminus \rmJ_u$.

Let us first notice that \eqref{eq:24strong} implies \eqref{eq:24}, so
that \eqref{eq:67}, \eqref{eq:56} and \eqref{eq:57} holds. 

Fatou's Lemma yields that 
\begin{equation}
  \label{eq:32}
  \liminf_{h\in H}\sF_h(t)\le \sfL.
\end{equation}
By a diagonal argument, combining the convergence \eqref{eq:57} and
the l.s.c.\ property \eqref{eq:26} we
obtain
\begin{displaymath}
  \tilde \sF(t)= \inf \big\{\liminf_{h\in H}\sF_h(t_h):t_h\to
  t\big\}\le \sfL\quad
  \cF(t,u(t))\le \tilde \sF(t)\quad \text{for every }t\in
  \cT,
\end{displaymath}
hence we get \eqref{eq:70}.

Let us now assume \eqref{eq:31} and let us prove the BV-dissipation
inequality \eqref{eq:5bv}; it is not restrictive to prove the
inequality for $t=T$. 

We consider the function
$A:\rmI\times [0,\sfL]\to \R\cup\{\infty\}$
\begin{equation}
  \label{eq:34}
  A(t,f):=\psi^*(f)- \cP(t,u(t),f)\ge -\cP(t,u(t),\sfL).
\end{equation}
Denoting by $\cL$ the Lebesgue-measurable subsets of $\rmI$ and by
$\cB$ the Borel sets of $\R^2$, it is easy to check that $A$ is
$\cL\otimes \cB$-measurable, thanks to the monotonicity and upper
semicontinuity of $f\mapsto \cP(t,u(t),f)$.

By \eqref{eq:31}, it is the immediate to see that for a.a.\ $t\in \rmI$ 
\begin{equation}
  \label{eq:123}
  a(t):=\min\Big\{A(t,f):\cF(t,u(t))\le f\le \sfL\Big\}>-\infty
\end{equation}
Applying \cite[Lemma III.39]{Castaing-Valadier77} we find a Lebesgue
measurable map $\sF\to \R$ such that 
\begin{displaymath}
  \cF(t,u(t))\le \sF(t)\le \sfL ,\quad
  \psi^*(\sF(t))-\cP(t,u(t),\sF(t))=a(t)\qquad
  \text{for $\Leb 1$-a.a.\ $t\in \rmI$},
\end{displaymath}
and, up to a modification of $\sF$ in a negligible set, it is not
restrictive to assume $\sF$ Borel and $\sF(t)=\sfL$ on a Borel set
containing $\rmJ_u$ and where $\Cmd u\sfd$ is concentrated.

Setting $a_h(t):=\psi_h^*(\sF_h(t))-\cP_h(t,u_h(t),\sF_h(t))$, by
\eqref{eq:26a} is immediate to see that 
\begin{equation}
  \label{eq:135}
  \liminf_{h\in H}a_h(t)\ge a(t).
\end{equation}
% $H'\subset H$ be an arbitrary subsequence; 
% by a standard diagonal argument, starting from \eqref{eq:56} we can find a suitable subsequence
% $H''\subset H'$ such that 
% By \eqref{eq:62} we obtain
% \begin{equation}
%   \label{eq:74}
%   \limsup_{h\in H''}\cP_h(t,u_h(t))\le \cP(t,u(t))\quad \text{for $\Leb 1$-a.e.\
%   }t\in (0,T).
% \end{equation}
Since \eqref{eq:5bis} and (C1) also yield
\begin{equation}
  \label{eq:76}
  \sup_h\int_0^T \big(\cP_h(t,u_h(t),\sF_h(t))\big)_-\,\d t<\infty,
\end{equation}
Fatou's lemma and \eqref{eq:135} imply
\begin{equation}
  \label{eq:77}
  \int_0^T\big(a(t)\big)_+\,\d t\le 
  \liminf_{h\in
    H}\int_0^T\big(a_h(t)\big)_+\,\d t<\infty,
\end{equation}
and the inequality $a(t)\ge -\cP(t,u(t),\sfL)$ shows that $a\in L^1(0,T)$.
\eqref{eq:24strong} and Fatou's Lemma then yield
\begin{equation}
  \label{eq:75}
  \begin{aligned}
    \liminf_{h\in H}&\int_0^T
    \Big(\psi_h^*(\sF_h(t))-\cP_h(t,u(t),\sF_h(t))\Big)\,\d t\\&\ge
    % \int_0^T a(t)\,\d t=
    \int_0^T \Big(\psi^*(\sF(t))-\cP(t,u(t),\sF(t))\Big)\,\d t.
  \end{aligned}
\end{equation}
Recalling \eqref{eq:72}, we consider now the
measures 
\begin{equation}
  \label{eq:126}
  \begin{aligned}
    \eta_h:=&\Big(\psi_h^*\big(\sF_h\big)
    -\cP_h(\cdot,u_h,\sF_h)\Big)\Leb 1,\\
    \mu_{h}:=&\Big(\psi_h\big(|\dot
    u_h|_{\sfd_h}\big)+\psi_h^*\big(\sF_h\big)-\cP_h(\cdot,u_h,\sF_h)\Big)\Leb
    1= \nu_{h,0}+ \eta_h;
  \end{aligned}
\end{equation}
up to extracting a further subsequence, it is not restrictive to
assume that $\eta_h\weaksto \eta,$ $\mu_h\weaksto\mu\ge \nu_0+\eta$ in the duality with continuous
functions. Fatou's Lemma and the previous arguments easily imply
\begin{equation}
  \label{eq:128}
  \eta\ge \Big(\psi^*\big(\sF\big)-\cP\big(\cdot,u,\sF\big)\Big)\Leb 1.
\end{equation}
Since $\lim_{h\in
  H}\mu_h([0,T])=\mu([0,T])$, combining \eqref{eq:61} and 
\eqref{eq:75} inequality \eqref{eq:5bv} for $t=T$ follows if we show that 
\begin{equation}
  \label{eq:78}
  \begin{aligned}
    \mu([0,T])&\ge \int_{0}^{T} \Big(\psi\big(|\dot
    u|_\sfd\big)+\psi^*(\sF) -\cP(\cdot,u,\sF)\Big) \,\,\d r
    \\&+\sfL\int_0^T \d\Cmd u\sfd+\pJmp{\sfd,\frf}(u;[0,T]).
  \end{aligned}
\end{equation}
Now, \eqref{eq:125} and \eqref{eq:128} imply
that 
\begin{equation}
  \label{eq:127}
  \mu\ge \nu_0+\eta\ge \Big(\psi\big(|\dot
  u|_{\sfd}\big)+\psi^*\big(\sF\big)-\cP(\cdot,u,\sF)\Big)\Leb 1+\sfL\Cmd u\sfd.
\end{equation}
Since the atomic and the diffuse part of a measure are mutually
singular, \eqref{eq:78} ensues if for every $t\in [0,T]$ with
$\mu(\{t\})>0$ we have
\begin{equation}
  \label{eq:82}
  \mu(\{t\})\ge \tricost{\sfd,\frf}t{u(t_-),u(t),u(t_+)}.
\end{equation}
In order to prove \eqref{eq:82} we just take two sequences $r_h^-<t<r_h^+$, $h\in \N$, converging
monotonically to $t$ such that 
\begin{equation}
  \label{eq:129}
  u_h(r_h^-)\topto u(t_-),\quad
  u_h(r_h^+)\topto u(t_+),
\end{equation}
and we set 
\begin{displaymath}
  \sfs_h(r):=r+\int_t^r \Big(\psi_h\big(|\dot
  u_h|_{\sfd_h}(\tau)\big)+\psi^*_h\big(\cF_h(\tau,u_h(\tau))\big)\Big)\,\d\tau,\quad
  \sfs_h^\pm:=\sfs_h(r_h^\pm).
\end{displaymath}
Since  $\cF_h(\tau,u_h(\tau))\le \sF_h(\tau)$ and 
\begin{equation}
  \label{eq:144}
  \limsup_{h\in H}
  \int_{t_{h\,-}}^{t_{h\,+}} \cP_h(t,u_h(t),\sF_h(t))\,\d t=0
\end{equation}
by \eqref{eq:24strong},
taking into account the definition \eqref{eq:126} of $\mu_h$ we obtain
\begin{displaymath}
\limsup_{h\in H''}(\sfs_h^+-\sfs_h^-)\le \limsup_{h\in
  H''}\mu_h([t_h^-,t_h^+])\le  \mu(\{t\}),
\end{displaymath}
and up to
extracting a suitable subsequence we can assume that 
$\sfs_h^\pm\to \sfs^\pm$ as $h\to\infty$. We denote by
$\sfr_h:=\sfs_h^{-1}$ the inverse map of $\sfs_h$: $\sfr_h$ is
$1$-Lipschitz, monotone, and maps $[\sfs_h^-,\sfs_h^+]$ onto
$[r_h^-,r_h^+]$.
We also set
\begin{equation}
  \label{eq:84}
  \vartheta_h(s):=
  \begin{cases}
    u_h(\sfr_h(s))&\text{if }s\in [\sfs_h^-,\sfs_h^+],\\
    u_h(r_h^+)&\text{if }s\ge \sfs_h^+,\\
    u_h(r_h^-)&\text{if }s\le \sfs_h^-,
  \end{cases}
\end{equation}
so that, in particular, we have 
\begin{displaymath}
  \vartheta_h(\sfs_h^\pm)=u_h(r_h^\pm),\qquad
  \vartheta_h(t)=u_h(t).
\end{displaymath}
 We observe that $\tilde\cE_h(\sfr_h(s),\vartheta_h(s))\le C$ and that
 the functions
$\vartheta_h$ are uniformly $\sfd_h$-Lipschitz: to show this fact, we choose $f\in
[0,\sfL)$ in such a way that $\sup_h\psi_h^*(f)\le 1$;
the inequality $\psi_h(v)\ge fv-\psi_h^*(f)$ yields
\begin{displaymath}
  \dot \sfs_h(r)\ge 1+\psi_h(|\dot u_h|_{\sfd_h}(r) )\ge 1+f\,|\dot u_h|_{\sfd_h}(r)-\psi^*_h(f)\ge f\,|\dot u_h|_{\sfd_h}(r)
\end{displaymath}
so that 
\begin{align}
  \notag
  f\sfd_h(\vartheta_h(\alpha),&\vartheta_h(\beta))=
  f\sfd_h(u_h(\sfr_h(\alpha)),u_h(\sfr_h(\beta))\le 
  \int_{\sfr_h(\alpha)}^{\sfr_h(\beta)}f\,|\dot u_h|_{\sfd_h}(r)\,\d
  r\\
  \label{eq:143}
  &\le   \int_{\sfr_h(\alpha)}^{\sfr_h(\beta)}\dot\sfs_h(r)\,\d r\le \sfs_h(\sfr_h(\beta))-\sfs_h(\sfr_h(\alpha))=\beta-\alpha,
\end{align}
whence the uniform $\sfd_h$-Lipschitz continuity of $\vartheta_h$.

By arguing as in the proof of Theorem \ref{thm:compactness}, we can
find a compact interval $I$ containing all the intervals
$[\sfs_h^-,\sfs_h^+]$ and a further subsequence such that $\vartheta_h(s)\to\vartheta(s)$
for every $s\in I$: it follows from \eqref{eq:143} that $\vartheta$ is $f^{-1}$-Lipschitz with
respect to $\sfd$, $\vartheta(\sfs^\pm)=u(t_\pm)$,
$\vartheta(t)=u(t)$, and 
\begin{equation}
  \label{eq:130}
  |\dot\vartheta_h|_{\sfd_h}\weakto^* m\quad\text{in
  }L^\infty(I)\quad\text{with }m\ge |\dot\vartheta|_\sfd.
\end{equation}
Moreover, by using the elementary inequality
\begin{equation}
  \label{eq:85}
  \psi_h(v)+\psi_h^*(f)\ge (f\lor a)v-\psi_h^*(a)\quad\text{for every
  }a\in [ 0,\sfL),
\end{equation}
and recalling that $r_h^+-r_h^-\to0$, 
$\psi_h^*(a)\to\psi^*(a)<\infty$, and \eqref{eq:144}, we have
\begin{align}
  \notag\mu(\{t\})&\ge\limsup_{h\to\infty}\mu_h([r_h^-,r_h^+])
  \\&\topref{eq:144}=\limsup_{h\in H}\int_{r_h^-}^{r_h^+}\Big(\psi_h\big(|\dot
  u_h|_{\sfd_h}(\tau)\big)+\psi^*_h\big(\cF_h(\tau,u_h(\tau))\big)\Big)\,\d\tau\\\notag&
  \topref{eq:85}\ge
  \liminf_{h\in H}\int_{r_h^-}^{r_h^+} \Big((\cF_h(\tau,u_h(\tau))\lor a) |\dot
  u_h|_{\sfd_h}(\tau)-\psi_h^*(a)\Big)\,\d \tau
  \\&\notag
  \ge
  \liminf_{h\in H}\int_{\sfs_h^-}^{\sfs_h^+}
  \Big((\cF_h(\sfr_h(s),\vartheta_h(s))\lor a) |\dot
  \vartheta_h|_{\sfd_h}(s)\Big)\,\d s -(r_h^+-r_h^-)\psi_h^*(a)
    \\\notag
    &\ge
    \liminf_{h\in H}\int_{I}
  \Big((\cF_h(\sfr_h(s),\vartheta_h(s))\lor a) |\dot
  \vartheta_h|_{\sfd_h}(s)\Big)\,\d s 
\\\label{eq:145}
&\ge 
  \int_{I}
  \Big((\cF(t,\vartheta(s))\lor a) m(s)\Big)\,\d s
    \\\notag&
\ge 
  \int_{\sfs^-}^{\sfs^+}
  \Big((\cF(t,\vartheta(s))\lor a) |\dot\vartheta|_\sfd(s)\Big)\,\d s
  \ge \tricost{\sfd,\frf}t{u(t_-),u(t),u(t_+)},
\end{align}
viz.\ the desired \eqref{eq:82}.
For the last $\liminf$ inequality in \eqref{eq:145} we used a result proved in the next lemma.
\end{proof}

\begin{lemma}
  Let $I$ be a bounded interval in $\R$, $F,m,\ F_h,m_h:I\to [0,\infty)$, $h\in \N$, be measurable functions satisfying
  \begin{equation}
    \label{eq:131}
    \liminf_{h\to\infty}F_h(s)\ge F(s)\quad\text{for $\Leb 1$-a.a.\
      $s\in I$},\quad
    m_h\weakto m\quad\text{in }L^1(I).
  \end{equation}
  Then
  \begin{equation}
    \label{eq:132}
    \liminf_{h\to\infty}\int_I F_h(s)\,m_h(s)\,\d s\ge \int_I F(s)\,m(s)\,\d s
  \end{equation}
\end{lemma}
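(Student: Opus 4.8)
The plan is to prove the inequality by a three-step reduction that disentangles the merely one-sided pointwise convergence of the $F_h$ from the weak $L^1$-convergence of the $m_h$. First I would reduce to the case of a bounded limit. Replacing $F$ by its truncation $F\land k$ and using $\liminf_{h\to\infty} F_h\ge F\ge F\land k$, it is enough to show $\liminf_{h\to\infty}\int_I F_h\,m_h\,\d s\ge \int_I (F\land k)\,m\,\d s$ for every $k\in\N$ and then let $k\up\infty$, invoking monotone convergence on the right-hand side (legitimate since $m\ge 0$). Thus I may assume that the comparison function $G:=F\land k$ is bounded, $0\le G\le k$.

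The second step converts the one-sided bound into genuine a.e.\ convergence. I would set $g_h:=F_h\land G$. Since $F_h,m_h\ge 0$ we have $\int_I F_h\,m_h\,\d s\ge \int_I g_h\,m_h\,\d s$. On the one hand $g_h\le G$; on the other, $\liminf_{h\to\infty}F_h\ge G$ forces $\liminf_{h\to\infty}g_h\ge G$, whence $g_h\to G$ $\Leb 1$-a.e., with the uniform bound $0\le g_h\le k$. It therefore remains to prove $\liminf_{h\to\infty}\int_I g_h\,m_h\,\d s\ge \int_I G\,m\,\d s$ for a uniformly bounded sequence $g_h\to G$ a.e.

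The third step is the heart of the matter. I would split $\int_I g_h\,m_h = \int_I G\,m_h + \int_I (g_h-G)\,m_h$. The first integral converges to $\int_I G\,m$ because $G\in L^\infty(I)$ and $m_h\weakto m$ in $L^1(I)$. For the second I claim $\int_I (g_h-G)\,m_h\to 0$: here the crucial fact is that a weakly convergent sequence in $L^1$ is bounded and, by the Dunford--Pettis theorem, equi-integrable. Fixing $\eps>0$, equi-integrability yields $\delta>0$ with $\sup_h\int_A m_h\le \eps$ whenever $\Leb 1(A)\le \delta$; Egorov's theorem then provides $E\subset I$ with $\Leb 1(I\setminus E)\le \delta$ on which $g_h\to G$ uniformly. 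Splitting over $E$ and $I\setminus E$ and using $|g_h-G|\le 2k$ together with the uniform bound $M:=\sup_h\int_I m_h<\infty$ gives
\[
  \Big|\int_I (g_h-G)\,m_h\,\d s\Big|\le M\,\sup_E|g_h-G|+2k\,\eps,
\]
so that $\limsup_{h\to\infty}|\int_I(g_h-G)\,m_h\,\d s|\le 2k\,\eps$; letting $\eps\down 0$ proves the claim and hence \eqref{eq:132}. The main obstacle is exactly this last passage: no pointwise information on $m_h$ is available, so one must exploit equi-integrability (via Dunford--Pettis) to pass to the limit in a product of a weakly convergent factor and an a.e.-convergent bounded factor, while the preliminary truncation and the $\land$-trick are the devices that make this reduction possible.
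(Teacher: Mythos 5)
Your proof is correct, but it takes a genuinely different (and heavier) route than the paper's. The paper also truncates at level $k$, but it minorizes the whole tail of the sequence by a \emph{single fixed} bounded function, $G_k(s):=\inf_{h\ge k}F_h(s)\land k$: since $G_k\le F_h$ for $h\ge k$ and $G_k\in L^\infty(I)$, weak $L^1$-convergence applies directly to give $\liminf_h\int_I F_h\,m_h\ge\lim_h\int_I G_k\,m_h=\int_I G_k\,m$, and then one lets $k\up\infty$ by monotone convergence, using that $G_k\up\liminf_h F_h\ge F$ a.e. You instead minorize $F_h$ by the \emph{$h$-dependent} function $g_h:=F_h\land(F\land k)$, which converges a.e.\ to $F\land k$ but is no longer a fixed test function; this forces you to control $\int_I(g_h-F\land k)\,m_h$, and you do so correctly via Dunford--Pettis equi-integrability of the weakly convergent sequence $(m_h)$ combined with Egorov. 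Both arguments are valid; the paper's running-infimum trick buys a shorter, more elementary proof that needs nothing beyond monotone convergence and the definition of weak convergence, while your decomposition isolates a reusable general fact (the product of a uniformly bounded a.e.-convergent sequence with a weakly convergent $L^1$ sequence passes to the limit) at the cost of invoking two nontrivial theorems.
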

\begin{proof}
  Let us set $G_k(s):=\inf_{h\ge k}F_h(s)\land k$, $k\in \N$. Since 
  $G_k(s)\le F_h(s)$ for every $h\ge k$ and $G_k\in L^\infty(I)$ we
  have for every $k\in \N$
  \begin{displaymath}
    \liminf_{h\to\infty}\int_I F_h(s)\,m_h(s)\,\d s\ge
    \liminf_{h\to\infty}\int_I G_k(s)\,m_h(s)\,\d s=\int_I
    G_k(s)\,m(s)\,\d s.
  \end{displaymath}
  On the other hand, for $\Leb 1$-a.a.\ $s\in I$, $k\mapsto G_k(s)$ is
  a nondecreasing sequence
  converging to $\liminf_{h\to\infty}F_h(s)$, so that by monotone
  convergence
  \begin{displaymath}
    \lim_{k\to\infty}\int_I
    G_k(s)\,m(s)\,\d s\ge \int_I F(s)\,m(s)\,\d s.\eqno{\qedhere}
  \end{displaymath}
\end{proof}
\noindent
In the case of a limit metric dissipation function $\psi$ with
superlinear growth we have a completely analogous result, which can be
compared
with \cite[Theorem 4.8]{Mielke-Rossi-Savare12}. We omit the 
similar proof: it can be carried out without assuming
\eqref{eq:26}, which has been used only to characterize the
contribution of jump part in the energy dissipation inequality \eqref{eq:5bv}.
\begin{theorem}[Convergence in the superlinear case]
  \label{thm:convergence2}
  Let us assume that $\psi$ is superlinear and for $h\in\N$
  let 
  $(\XX,\sfd_h,\cE_h,\cF_h,\cP_h)$ be a family of evolution
  systems satisfying {\upshape (C1,2,3) and (C4$_{E,FP}$)} with respect to a sequence
  $\bar u_h\in \XX$ $\sigma$-converging to $\bar u$ and
  let  $(\XX,\sfd,\cE,\cF,\cP)$ be an upper gradient evolution system.\\
  Let  $u_h\in \AC{}{0,T}{(\XX,\sfd_h)}$, $\sF_h\in \rmB_+([0,T])$, $h\in H$, be sequences of curves
  satisfying the $\psi_h$-energy dissipation inequality
  \eqref{eq:5bis},
  pointwise converging to $u$ in $(\XX,\sigma)$, with $u_h(0)=\bar
  u_h$, and satisfying  \eqref{eq:24strong}.
  % \begin{equation}
  %   \label{eq:24strong}
  %   \lim_{h\in H} \cE_h(0,\bar u_h)=\cE(0,\bar u),\quad
  %   r\mapsto \big[\cP_h(r,u_h(r),\sF_h(r))\big]_+ \text{ are equi-integrable in $(0,T)$}.
  % \end{equation}
  %\begin{description}
  % \item[$(\sfL=\infty):$] If $\psi$ is superlinear, then $u\in \AC{}{0,T}{(\XX,\sfd)}$
  %   and satisfies the energy-dissipation inequality \eqref{eq:5}.
  % \item[$(\sfL<\infty):$] If $\psi$ has $\sfL$-linear growth, 
  Then $u\in \AC{}{0,T}{(\XX,\sfd)}$ is a $\psi$-gradient flow
  of the limit system and 
  \begin{gather}
    \label{eq:69bis}
    \lim_{h\in H}\cE_h(t,u_h(t))=\cE(t,u(t))\quad\forevery t\in [0,T].
  \end{gather}
%\end{description}
\end{theorem}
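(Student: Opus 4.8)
The plan is to run the argument of Theorem~\ref{thm:convergence} essentially verbatim, taking advantage of the fact that in the superlinear regime the limit curve carries neither a Cantor nor a jump part: this removes the whole blow-up construction around jump points (the only place where \eqref{eq:26} was used) and reduces the proof to its ``diffuse'' core. Concretely, I would first observe that \eqref{eq:24strong} implies \eqref{eq:24}, so Theorem~\ref{thm:compactness} applies and yields the a priori bounds \eqref{eq:67}. Since $\psi$ is superlinear we have $\sfL=\infty$, so \eqref{eq:142} forces $\Cmd u\sfd=0$ and $\jmd u\sfd=0$; hence $u\in\AC{}{0,T}{(\XX,\sfd)}$ and $\rmJ_u=\emptyset$. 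Moreover the measures $\nu_{h,0}$ of \eqref{eq:72} converge weakly$^*$ (in duality with $C([0,T])$) to some $\nu_0$ with $\nu_0\ge\psi(|\dot u|_\sfd)\Leb 1$ by \eqref{eq:125}, and $\int_0^T\psi(|\dot u|_\sfd)\,\d r<\infty$.

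Next I would construct the Borel map $\sF$ exactly as in Theorem~\ref{thm:convergence}: through $A(t,f):=\psi^*(f)-\cP(t,u(t),f)$, setting $a(t):=\min\{A(t,f):f\ge\cF(t,u(t))\}$ and selecting, via \cite[Lemma III.39]{Castaing-Valadier77}, a Borel $\sF$ with $\sF(t)\ge\cF(t,u(t))$ realizing $a$. The only change is that $f$ now ranges over the \emph{unbounded} interval $[\cF(t,u(t)),\infty)$, $\psi^*$ being everywhere finite and superlinear. Writing $a_h(t):=\psi_h^*(\sF_h(t))-\cP_h(t,u_h(t),\sF_h(t))$, the lower semicontinuity {\upshape (C4$_{FP}$)} together with \eqref{eq:54} gives $\liminf_{h\in H}a_h(t)\ge a(t)$ as in \eqref{eq:135}. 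Passing to weak$^*$ limits of the measures \eqref{eq:126}, $\eta_h\weaksto\eta$ and $\mu_h\weaksto\mu\ge\nu_0+\eta$, Fatou's Lemma then yields $\eta\ge(\psi^*(\sF)-\cP(\cdot,u,\sF))\Leb 1$ as in \eqref{eq:128}, whence
\[
\mu\ge\Big(\psi\big(|\dot u|_\sfd\big)+\psi^*(\sF)-\cP(\cdot,u,\sF)\Big)\Leb 1.
\]
Because $\rmJ_u=\emptyset$ there are no atoms to account for, so the estimate \eqref{eq:82} and the rescaling of Theorem~\ref{thm:convergence} are simply omitted. Rewriting \eqref{eq:5bis} as $\cE_h(T,u_h(T))+\mu_h([0,T])\le\cE_h(0,\bar u_h)$, testing the weak$^*$ convergence against the constant $1$ so that $\mu_h([0,T])\to\mu([0,T])$, and using {\upshape (C4$_E$)} on $\cE_h(T,u_h(T))$ and \eqref{eq:24strong} on $\cE_h(0,\bar u_h)$, I obtain the limit energy-dissipation inequality \eqref{eq:5} at $t=T$.

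Finally, since $(\XX,\sfd,\cE,\cF,\cP)$ is an upper gradient system, \eqref{eq:5} at $t=T$ together with \eqref{eq:29} is equivalent to (\ref{eq:sub1}a,b,c), so $u$ is a $\psi$-gradient flow in the sense of Definition~\ref{def:GF} and \eqref{eq:5} holds with equality on every $[0,t]$. Localizing this energy balance on the subintervals $[0,t]$ and matching it with the $\liminf$ inequalities already established (in particular $\liminf_h\cE_h(t,u_h(t))\ge\cE(t,u(t))$ from {\upshape (C4$_E$)}) forces the terminal energies and the diffuse masses to converge simultaneously, giving the energy convergence \eqref{eq:69bis}. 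I expect the only genuinely delicate point to be the construction of $\sF$ and the bound $\liminf_h a_h\ge a$: one has to control $a_h(t)$ along sequences with $\sF_h(t)\to\infty$, and it is precisely the superlinearity of $\psi^*$ that both makes the minimization defining $\sF$ well posed and forces $a_h(t)\to+\infty$ in that regime, thereby replacing the cutoff at $\sfL$ and the integrability assumption \eqref{eq:31} that were needed in the linear-growth case.
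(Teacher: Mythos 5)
Your proposal matches the paper exactly: the authors omit the proof of Theorem \ref{thm:convergence2}, stating only that it is obtained from the proof of Theorem \ref{thm:convergence} by discarding the blow-up analysis at jump points (the sole place where \eqref{eq:26} enters), since superlinearity forces $\Cmd u\sfd=\jmd u\sfd=0$ via \eqref{eq:142} and hence $u\in\AC{}{0,T}{(\XX,\sfd)}$. Your reconstruction --- including the identification of the construction of $\sF$ over the now-unbounded interval, with the superlinearity of $\psi^*$ replacing the cutoff at $\sfL$ and assumption \eqref{eq:31}, as the one genuinely delicate step --- is precisely the argument the authors indicate.
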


\subsection*{Examples}
%\label{sec:examples}
\subsection*{$\lambda$-convex energies.}
Let $(\XX_h,\|\cdot\|_h)$, $(\XX,\|\cdot\|)$ be a family of Banach spaces
such that $\XX_0\subset \XX_h\subset \XX$ with continuous and dense inclusions.
Setting $\|u\|_h=\infty$ if $u\in
\XX\setminus \XX_h$ we suppose that $\Gamma(\XX)\text{-}\lim_{h\to\infty}\|\cdot\|_h=\|\cdot\|$.
Let $\cE:\XX_0\to(-\infty,+\infty]$ be a proper, $\lambda$-convex
functional, i.e.\ satisfying
for every $u_0,u_1\in \XX_0$
\begin{displaymath}
  \cE((1-\theta)u_0+\theta u_1)\le
  (1-\theta)\cE(u_0)+\theta\cE(u_1)-\frac\lambda2\theta(1-\theta)\|u_1-u_0\|
  \quad\forall\,\theta\in [0,1].
\end{displaymath}
We consider
a time-dependent functional $\ell\in \rmC^1([0,T];\XX_0')$, we suppose that
$\cE(t,u):=\cE(u)-\langle \ell(t),u\rangle$ has compact sublevels on
$\XX_0$, and we extend it to $\XX$ by setting $\cE(t,u)=\infty$ if $u\in
\XX\setminus \XX_0$.
We set $\cP(t,u)=\partial_t \cE(t,u)=\langle \ell'(t),u\rangle$ and 
$$\cF_h(t,u):=\min\big\{\|\xi-\ell(t)\|_{h}^*:\xi\in \partial_h
\cE(u)\big\},$$
where $\partial_h$ is the Frech\'et subdifferential of the restriction
of $\cE$ in $\XX_h$. Notice that $\partial_h\cE\subset \XX_0'$ and 
it is not difficult to check (see 
\cite{Rossi-Mielke-Savare08,Mielke-Rossi-Savare12})
that $(\XX,\sfd_h,\cE,\cF_h,\cP)$ and $(\XX,\sfd,\cE,\cF,\cP)$
are upper gradient evolution systems and all the
assumptions (C1,2,3,4) are satisfied.

\subsection*{Dirichlet energy and double-well potentials.}
Here is a concrete example of the above setting.
Consider a bounded open set $\Omega\subset \Rd$, a function $W\in
\rmC^2(\R)$ with $\inf_\R W>-\infty,$ $\inf_\R W''>-\infty$, and $\ell\in
\rmC^1([0,T];L^2(\Omega))$. In the space $\XX:=L^2(\Omega)$ endowed
with
the strong $L^2$-topology we set 
\begin{align*}
  \cE(t,u):=&\int_\Omega \Big(\frac 12|\nabla
  u|^2+W(u)-\ell(t)\,u\Big)\,\d x \quad\text{if }u\in H^1_0(\Omega),\
  W\circ u\in L^1(\Omega),\\
  \cE(t,u):=&+\infty\quad\text{otherwise}.
\end{align*}
We choose a sequence of exponents $p_h>1$ converging to $1$ as
$h\to\infty$, and initial data
$\bar u_h\in H^1_0(\Omega)$ with $\cE(0,\bar u_h)<\infty$ strongly
converging to $\bar u$ in $H^1_0(\Omega)$ with $W\circ u_h\to W\circ
u$ in $L^1(\Omega)$. 

We let $\sfd_h$ be the distance induced by
the $L^{p_h}(\Omega)$ norm, $\sfd$ the $L^1(\Omega)$-distance,
$\psi_h(v):=\frac 1{p_h}v^{p_h}$ , and
\begin{equation}
  \label{eq:134}
  \cF_h(t,u):=\|-\Delta u+W'(u)-\ell(t)\|_{L^{p_h^*}(\Omega)},\quad
  \cP(t,u)=\int_\Omega \ell'(t)\, u\,\d x,
\end{equation}
with $\cF_h(t,u)=+\infty$ if $-\Delta
u+W'(u)-\ell(t)\not\in L^{p_h}(\Omega)$; $\cF$ has the analogous expression in
$L^\infty(\Omega)$.

Applying the results of \cite[\S 7.2]{Rossi-Mielke-Savare08} we see
that $(\XX,\sfd_h,\cE,\cF_h,\cP)$ and $(\XX,\sfd,\cE,\cF,\cP)$ are
upper gradient evolution systems and 
for every $h\in \N$ there exists a solution $u_h$ of the
$\psi_h$-gradient flow. It is also easy to check that all the
assumptions (C1,2,3,4) are satisfied
so that, up to subsequences, $u_h(t,\cdot)$ converge to $u(t,\cdot)$ 
in $L^2(\Omega)$ at every time $t$ with convergence of the energies
$\cE(t,\cdot)$
and $u$ is a BV solution of the rate-independent evolution 
governed by $(\XX,\sfd,\cE,\cF,\cP)$.

\subsection*{Marginal functionals.}
In the finite dimensional setting described in Section
\ref{sec:smooth} 
(here
 we also assume that the norms $\|\cdot\|_u$ are independent of $h$),
let us consider the marginal functional \eqref{eq:27} and the 
couple $(\cF,\cP)$ given by \eqref{eq:137} and \eqref{eq:152}.

It is not difficult to see that 
$(\XX,\sfd,\cE,\cF,\cP)$ is an upper gradient evolution system.
$\psi_h$-gradient flows in the superlinear case can be obtained by
applying the results of \cite{Mielke-Rossi-Savare12}: they in
particular solve the
differential inclusions \eqref{eq:86marginal}.
Existence of a BV solution and convergence of the $\psi_h$ gradient
flows can thus be obtained by applying Theorems \ref{thm:compactness}
and \ref{thm:convergence}.

\subsection*{Acknowledgments.}

The authors thank the anonymous reviewer for the careful 
reading of the manuscript.

\renewcommand{\sc}{}
\def\cprime{$'$}

% \bibliographystyle{siam}
% \bibliography{bibliografia2012}
\end{document}